\documentclass[11pt]{article}

\usepackage[margin=1in]{geometry}
\usepackage{amsmath,amssymb,amsthm}
\usepackage{enumitem}
\usepackage{xcolor}
\usepackage{hyperref}

\newcommand{\N}{\mathbb{N}}

\newcommand{\supp}{\mathrm{supp}}

\newcommand{\Hom}{\operatorname{Hom}}
\newcommand{\ord}{\operatorname{ord}}

\newtheorem{theorem}{Theorem}[section]
\newtheorem{lemma}[theorem]{Lemma}
\newtheorem{corollary}[theorem]{Corollary}
\newtheorem{definition}[theorem]{Definition}
\newtheorem{remark}[theorem]{Remark}
\newtheorem{proposition}[theorem]{Proposition}
\newtheorem{conjecture}[theorem]{Conjecture}

\title{The Separating Noether Number of Finite Abelian Groups}

\author{Jing Huang\footnote{E-mail address: {\it
jhuangmath@gzhu.edu.cn (J. Huang).
}}
}

\date{\small
School of Mathematics and Information Science,
Guangzhou University, Guangzhou 510006, China
}

\begin{document}
\maketitle

\begin{abstract}
For a finite abelian group $G$,
let $\beta_{\mathrm{sep}}(G)$ denote its
separating Noether number.
We determine $\beta_{\mathrm{sep}}(G)$
exactly for every finite abelian group
$
G \cong C_{n_1}\oplus \cdots \oplus C_{n_r}$
with
$
1<n_1 \mid \cdots \mid n_r.
$
If $r=2s-1$, then
\[
\beta_{\mathrm{sep}}(G)=n_s+n_{s+1}+\cdots+n_r,
\]
whereas if $r=2s$, then
\[
\beta_{\mathrm{sep}}(G)=\frac{n_s}{p_1}+n_{s+1}+\cdots+n_r,
\]
where $p_1$ denotes the smallest
prime divisor of $n_1$.
Our proof is additive-combinatorial in
nature.
It avoids the Davenport-equality assumption
$\mathsf{D}(n_sG)=\mathsf{D}^{*}(n_sG)$
used in previous works. The key ingredients are
a geometric reduction of auxiliary sequences via
the novel construction of geodesic surrogates,
alongside a uniform lifting procedure for relation groups.
As an application, we prove that if $r\ge 2$, then every extremal
separating atom $A$ over $G_0$ with $|G_0|\le r+1$
satisfies $|\supp(A)|=|G_0|=r+1$. Equivalently,
the conjectured support conclusion of Schefler, Zhao, and Zhong
holds for all finite abelian groups of rank at least $2$.
By contrast, the rank-$1$ case is exceptional:
for cyclic groups, the analogous conjectural conclusion is false.

\medskip
\textbf{2020 MSC:} 13A50,   11B75, 20K01.

\textbf{Keywords:} Separating invariant; separating Noether number;
zero-sum sequence; Davenport constant; finite abelian group.
\end{abstract}

\section{Introduction}
Let $G$ be a finite group acting linearly on a
finite-dimensional vector space $V$ over the
field of complex numbers $\mathbb{C}$. This
action extends naturally to the coordinate
ring $\mathbb{C}[V]$, which can be identified with
the polynomial algebra $\mathbb{C}[x_1,\ldots,x_k]$,
where $k=\dim_{\mathbb{C}}V$.
Noether's degree bound \cite{Noether16}
guarantees that the invariant ring
$\mathbb{C}[V]^G$ is finitely generated
by homogeneous polynomials of degree less
than or equal to $|G|$. The \emph{Noether number},
denoted by $\beta(G)$, is defined as the smallest
positive integer $d$ such that for every finite-dimensional
$G$-module $V$, the ring $\mathbb{C}[V]^G$ is
generated by homogeneous invariants of degree at most $d$
(see \cite{Schmid91}).

In the non-abelian setting, the exact value of the
Noether number is known for only a very
restricted number of group families (see  \cite{Cz19,Cz14,Cz18}).
For finite abelian groups, a decisive bridge to additive
combinatorics was built
by Schmid  \cite{Schmid91}, who showed that the Noether number coincides
with the Davenport constant:
$
\beta(G)=\mathsf{D}(G)
$
for every finite abelian group $G$.
In this way, the invariant-theoretic study of
$\beta(G)$ became closely tied to the arithmetic of
zero-sum sequences, a circle of ideas that goes back
to Olson's foundational work on zero-sum problems
\cite{OlsonI,OlsonII} and has been developed
systematically in the modern zero-sum literature (see, for instance, \cite{GaoGeroldinger06,GeroldingerHK06}).
Nevertheless, despite this powerful combinatorial
translation, computing the exact value of
$\beta(G)$ (equivalently $\mathsf{D}(G)$) remains notoriously difficult.
To date, precise formulas for $\mathsf{D}(G)$
have been established only for restricted classes,
primarily groups of rank at most $2$, finite
abelian $p$-groups, and a handful of
specialized families;
 we refer the reader to \cite[Lemma 2.1]{SZZ}
for a list of families for which the equality
$\mathsf{D}(G)=\mathsf{D}^{*}(G)$ is known
(and hence $\mathsf{D}(G)$ is explicit). For general
finite abelian groups of rank $3$ or
higher, the exact determination of the
Noether number remains a major open problem.

However, requiring a set of invariants to generate
the entire algebra $\mathbb{C}[V]^G$ is a
notoriously demanding condition, often leading
to generating sets of prohibitive size and complexity.
To circumvent these obstacles, a weaker but
often more flexible invariant-theoretic notion
is that of a \emph{separating set}.
A subset $S\subseteq \mathbb{C}[V]^G$ is separating
if for all $v,w\in V$ with $Gv\neq Gw$ there exists
$f\in S$ such that $f(v)\neq f(w)$.
This notion was promoted in computational invariant theory by Derksen and Kemper \cite{DerksenKemper02}.
In analogy with the Noether number,
Kohls and Kraft \cite{KohlsKraft10}
introduced the \emph{separating Noether number}
$\beta_{\mathrm{sep}}(G)$ of a finite
group $G$, defined as the smallest positive
integer $d$ such that for every
finite-dimensional $G$-module $V$,
there is a separating set consisting of
invariant polynomials of degree at most $d$.
By definition,
$
\beta_{\mathrm{sep}}(G)\le \beta(G),
$
and determining $\beta_{\mathrm{sep}}(G)$
is the orbit-separation analogue of the
classical Noether number problem.

For finite abelian groups, Domokos \cite{Domokos17}
reduced the computation of $\beta_{\mathrm{sep}}(G)$ to a finite additive-combinatorial problem on relation
groups and small supports. More precisely,
the computation is controlled by subsets
$G_0 \subseteq G$ of cardinality at
most $\mathsf{rank}(G)+1$
(the Helly dimension of $G$),
allowing the problem to be reformulated
in terms of separating atoms in
monoids of zero-sum sequences.

During the last few years, exact formulas
for $\beta_{\mathrm{sep}}(G)$ have been obtained
for several important families of finite abelian
groups. Domokos settled the cyclic case \cite{Domokos17}.
Schefler treated all rank $2$ groups \cite{SchJCTA}
and direct sums of copies of a cyclic group \cite{Sch25}.
Recently, Schefler, Zhao, and Zhong \cite{SZZ,SZZ2509}
developed a systematic approach via separating atoms,
yielding exact formulas for broad classes of groups,
including finite abelian $p$-groups and groups of rank $3$,
$4$,  and $5$. More generally, they obtained exact formulas
or sharp upper bounds on $\beta_{\mathrm{sep}}(G)$
under the additional hypothesis
$\mathsf{D}(n_sG)=\mathsf{D}^{*}(n_sG)$,
where $\mathsf{D}^{*}$ denotes the standard
structural lower bound for the Davenport
constant and $n_s$ is a suitably chosen invariant of $G$.
The equality
$
\mathsf{D}(n_sG)=\mathsf{D}^{*}(n_sG)
$
holds in several important
situations, such as  groups of rank at
most two and $p$-groups,
but not in full generality.
Unconditionally removing this hypothesis
is therefore the central challenge to obtaining
a complete formula for $\beta_{\mathrm{sep}}(G)$.

Alongside computing the exact global formula,
a fundamental objective is the corresponding inverse
problem: characterizing the structure of extremal
separating atoms of maximal length
$\beta_{\mathrm{sep}}(G)$. Since
Domokos' reduction restricts attention to
subsets of size at most $r+1$, a natural
question is whether an extremal separating
atom must actually use the full support
allowed by this bound. This was
formulated by Schefler,
Zhao, and Zhong \cite{SZZ2509} as follows.

\medskip
\begin{conjecture}[{\cite[Conjecture 5.4]{SZZ2509}}]
\label{conj:5.4}
Let $G = C_{n_1} \oplus \dots \oplus C_{n_r}$ with $1 < n_1 \mid \dots \mid n_r$. Let $A$ be a separating atom over $G_0$ with $|A| = \beta_{\mathrm{sep}}(G)$, where $G_0 \subseteq G$ is a subset with $|G_0| \le r + 1$. Then $|\operatorname{supp}(A)| = |G_0| = r + 1$.
\end{conjecture}

\begin{remark}
{\rm
For $r=1$, the group is cyclic, say $G \cong C_n$, and we have $\beta_{\mathrm{sep}}(G) = n$. If we take a subset $G_0 = \{g\}$ consisting of a single generator $g$ of $G$, then the sequence $A = g^n$ is a separating atom over $G_0$ of maximal length $|A| = \beta_{\mathrm{sep}}(G)$. However, in this case,
\[
|\operatorname{supp}(A)| = |G_0| = 1 \neq r+1 = 2.
\]
Thus, the conclusion fails for $r=1$, making the condition $r \ge 2$ indispensable.
}
\end{remark}

\medskip
The primary objective of this paper is to
determine the separating Noether number
for every finite abelian group in closed form.
Our main theorem achieves this by distinguishing
two cases based on the parity of the rank.

\begin{theorem}
\label{formulas}
Let $G\cong C_{n_1}\oplus \cdots \oplus C_{n_r}$ with $1<n_1\mid \cdots \mid n_r$, and let $\beta_{\mathrm{sep}}(G)$ be the separating Noether number of $G$.
\begin{enumerate}
    \item[(1)] If $r=2s-1$, then
    \[
    \beta_{\mathrm{sep}}(G)=n_s+n_{s+1}+\cdots+n_r.
    \]
    \item[(2)] If $r=2s$, then
    \[
    \beta_{\mathrm{sep}}(G)=\frac{n_s}{p_1}+n_{s+1}+\cdots+n_r,
    \]
    where $p_1$ is the minimal prime divisor of $n_1$.
\end{enumerate}
\end{theorem}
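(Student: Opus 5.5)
The plan is to go through Domokos' reduction. It says $\beta_{\mathrm{sep}}(G)$ is the maximal length of a separating atom over some $G_0\subseteq G$ with $|G_0|\le r+1$. Concretely, write $G_0=\{g_1,\dots,g_k\}$ with $k\le r+1$, and let $\Gamma\le \mathbb{Z}^k$ be the relation lattice of $(g_1,\dots,g_k)$. Then $\beta_{\mathrm{sep}}(G)$ is the maximum, over such $G_0$, of the largest $|A|$ for $A\in\mathbb{N}^k\cap\Gamma$ not expressible as a sum of strictly shorter elements of $\mathbb{N}^k\cap\Gamma$ with the same "separating" behaviour. Equivalently, it is the exponent-type invariant $\mathsf{D}$ of the finite group $\mathbb{Z}^k/\Gamma'$, computed for a suitable sublattice. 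I would therefore split the proof into a lower-bound construction and an upper bound, each treated according to the parity of $r$.

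\textbf{Lower bound.} For $r=2s-1$, take $k=r+1=2s$ and pair the basis elements $e_1,\dots,e_r$ of $G$. Put $g_1=e_s$, and choose the remaining $g_i$ as combinations $e_i-e_{j}$ that chain the summands $C_{n_s},\dots,C_{n_r}$. The aim is that the atom $g_1^{n_s}\cdots$ has length $n_s+\dots+n_r$, while the coordinates of the small summands $C_{n_1},\dots,C_{n_{s-1}}$ are absorbed by pairing them with the large ones; this is the construction already present in Schefler–Zhao–Zhong. For $r=2s$ one summand is left unpaired. There I would use an element of order $p_1$ lying in $C_{n_1}$ to cut $n_s$ down to $n_s/p_1$. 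I would check these explicit atoms directly, and I expect them to match the known lower bounds.

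\textbf{Upper bound.} This is the heart of the proof and the main obstacle. Previous work bounds the length of a separating atom $A$ over $G_0$ by projecting onto $n_sG$ and then uses $\mathsf{D}(n_sG)=\mathsf{D}^*(n_sG)$. I want to avoid that projection. Instead I would work inside the relation group $\Gamma$ of $G_0$, which has rank $k\le r+1$ and index $|\langle G_0\rangle|$. I would lift it uniformly: choose a basis of $\Gamma$ adapted to the Smith normal form, so that $\mathbb{Z}^k/\Gamma\cong\langle G_0\rangle$, whose invariants divide $n_1,\dots,n_r$.

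Given an atom $A$ that is too long, I would build an auxiliary sequence of lattice points between $0$ and $A$, namely a monotone lattice path of partial sums. A pigeonhole argument should then find two partial sums that are congruent modulo a subgroup which is enough for separation. Only the \emph{cyclic} part of the path is needed here, so only exponents enter, not Davenport constants. The remaining difficulty is controlling the path geometrically, and this is where the "geodesic surrogates" come in. I would replace $A$ by a shortest path in the lattice $\mathbb{N}^k$ realizing the same residue class. Such a path has at most $n_{i}-1$ steps in each direction after the pairing, which gives the bound $\sum_{i\ge s}n_i$ in the odd case. In the even case, the unpaired direction $C_{n_s}$ can be reduced further: one uses that some nontrivial element of $\Gamma$ has a coordinate divisible by $n_s/p_1$. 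Establishing this counting cleanly for arbitrary invariants $n_1\mid\dots\mid n_r$ is the step I expect to be hardest. My fallback is induction on $r$, peeling off $C_{n_1}\oplus C_{n_r}$ pairs and using the rank-$2$ and cyclic results as base cases.
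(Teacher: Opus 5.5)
You have not actually argued the upper bound, which is the whole content of the theorem beyond the lower bounds already available from Schefler. Your one concrete structural choice also points the wrong way. You propose to avoid projecting onto $n_sG$, but that projection is exactly what produces the shape $n_s+\cdots+n_r$. The only thing that must be avoided is the equality $\mathsf{D}(n_sG)=\mathsf{D}^*(n_sG)$. The missing mechanism has three parts.
\begin{enumerate}
\item The lifting lemma of Schefler--Zhao--Zhong: if $A$ is a separating atom over $G_0$ with $|A|=\beta_{\mathrm{sep}}(G)$, then every zero-sum sequence over $G_0$ whose multiplicities are all divisible by $n=n_s$ lies in $q(\mathcal{B}(G_0)_{|A|-1})$.
\item Write $m_i=nk_i+x_i$. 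The lemma forces $B=\prod_i (ng_i)^{k_i}$ to be geodesic in $nG$ with respect to $\{ng_1,\dots,ng_k\}$. Otherwise a shorter representation of $\sigma(B)$ gives a zero-sum $\widetilde{A}$ with $|\widetilde{A}|<|A|$ that differs from $A$ by $n$-divisible zero-sum corrections. Next take $(t_i)\in\mathbb{N}^k$ minimal subject to $\sum_i(t_in-x_i)g_i=0$. Then $Y=\prod_i(ng_i)^{t_i-1}$ is geodesic by minimality. Also $V=\prod_i g_i^{t_in-x_i}$ satisfies $|V|\ge|A|$, because $AV$ is $n$-divisible.
\item The Klopsch--Lev theorem $\operatorname{diam}^{+}(H)=\mathsf{D}^*(H)-1$ bounds $|B|$ and $|Y|$ by $\mathsf{D}^*(nG)-1$ unconditionally. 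Set $M=n(\mathsf{D}^*(nG)-1)$. Adding $|A|\le M+\sum_i x_i$ to $|A|\le|V|\le M+kn-\sum_i x_i$ gives $2|A|\le 2M+kn$ with $k\le r+1$. For odd $r$ this is exactly $n_s+\cdots+n_r$.
\end{enumerate}
Your substitutes do not do this job. Two partial sums congruent modulo a subgroup only yield a subsequence whose sum lies in that subgroup. That says nothing about whether $A\in q(\mathcal{B}(G_0)_{|A|-1})$. The claim that a shortest lattice path has at most $n_i-1$ steps in each direction has no justification for an arbitrary $G_0$ with $k=r+1$ elements. You also never explain how the length of a replacement path controls $|A|$; in the paper, that transfer is exactly what the $n$-divisible lifting provides.

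For $r=2s$ the same averaging only gives $\frac{n_s}{2}+n_{s+1}+\cdots+n_r$, and your proposed refinement is vacuous as stated. Some nonzero element of $\Gamma$ always has a coordinate divisible by $n_s/p_1$, for instance $\frac{n_s}{p_1}\operatorname{ord}(g_1)\,e_1$. So it cannot bring $n_s/2$ down to $n_s/p_1$. The paper needs two independent memberships, combined by B\'ezout. Put $T=n_{s+1}+\cdots+n_r$.
\begin{enumerate}
\item If $T+\frac{n_s}{p_1}<|A|\le T+\frac{n_s}{2}$, then some $A^m$ with $\gcd(m,n_1)=1$ lies in $q(\mathcal{B}(G_0)_{|A|-1})$. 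This uses multipliers $l$ coprime to $n_s$, the congruence $\sum_i x_i^{(l)}\equiv l|A|\pmod{n_s}$ for the remainders of $lm_i$, and geodesic surrogates for the scaled quotient sequences.
\item $A^{n_1}$ lies there as well. Domokos' separating-set criterion shows that relation groups over $n_1G$ are generated by nonnegative relations of length at most $\beta_{\mathrm{sep}}(n_1G)$. Meanwhile $n_1\beta_{\mathrm{sep}}(n_1G)\le T$, by the odd-rank formula and the half bound applied to $n_1G$.
\end{enumerate}
Your fallback induction cannot replace this. Peeling off $C_{n_1}\oplus C_{n_r}$ and adding $n_r$ gives the wrong value in even rank. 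For example, $\beta_{\mathrm{sep}}(C_3\oplus C_6\oplus C_6\oplus C_6)=2+6+6=14$, whereas $\beta_{\mathrm{sep}}(C_6\oplus C_6)+6=3+6+6=15$. The answer depends on the smallest prime divisor of the summand you remove. Finally, your lower-bound sketch is broken as written: with $g_1=e_s$ of order $n_s$, the subsequence $g_1^{n_s}$ is already zero-sum, so no longer atom contains it. That part can simply be cited from Schefler's Lemmas 5.2 and 5.5, as the paper does.
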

As an application of Theorem~\ref{formulas} and the structural rigidity of maximal separating atoms, we prove the conclusion conjectured in \cite[Conjecture~5.4]{SZZ2509} for all finite abelian groups of rank at least $2$.

\begin{corollary}
\label{cor:conj54-resolved}
Let $G\cong C_{n_1}\oplus \cdots
\oplus C_{n_r}$ with $1<n_1\mid \cdots \mid n_r$
and $r\ge 2$. Let $A$ be a separating atom over
a subset $G_0\subseteq G$ such that
$|A|=\beta_{\mathrm{sep}}(G)$ and $|G_0|\le r+1$.
Then
\[
|\operatorname{supp}(A)|=|G_0|=r+1.
\]
In particular, the conclusion of \cite[Conjecture~5.4]{SZZ2509} holds for all finite abelian groups of rank at least $2$.
\end{corollary}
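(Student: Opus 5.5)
The plan is to argue by contradiction. Suppose $A$ is a separating atom over $G_0$ with $|A|=\beta_{\mathrm{sep}}(G)$, and put $G_1=\supp(A)\subseteq G_0$ and $k=|G_1|$. We want to show $k=r+1$; since $k\le|G_0|\le r+1$, this gives $|\supp(A)|=|G_0|=r+1$ at once.

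\textbf{Step 1: pass to the support.} First I would check from the definition of separating atoms (via Domokos' relation-group reformulation) that $A$ is also a separating atom over $G_1$. The point is that any relation-group decomposition of $A$ over $G_1$ is already a decomposition over $G_0$, because coordinates outside $G_1$ are zero. So it suffices to bound separating atoms with full support on a set of $k$ elements, and to show that $k\le r$ forces $|A|<\beta_{\mathrm{sep}}(G)$.

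\textbf{Step 2: bound atoms on $k\le r$ points by a rank-$(k-1)$ problem.} A set of $k$ points has Helly-type dimension at most $k-1$. Concretely, translating $G_1$ by one of its elements, the relation lattice in $\mathbb{Z}^{G_1}$ is governed by a subgroup of $G$ generated by $k-1$ differences. Such a subgroup is isomorphic to $C_{m_1}\oplus\cdots\oplus C_{m_{k-1}}$ with $m_i\mid n_{r-k+1+i}$. I would then rerun the upper-bound argument of Theorem~\ref{formulas} (the geodesic-surrogate reduction and the lifting of relation groups) for this smaller configuration. The expected conclusion is that $|A|$ is at most the formula of Theorem~\ref{formulas} for rank $k-1$ with invariants $n_{r-k+2},\dots,n_r$, or at most $n_r$ when $k-1\le1$.

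This is the step I expect to be the main obstacle. The worry is that the upper-bound proof may use the number of points rather than the rank of the generated group, so one has to check that it really depends only on $|G_1|-1$. If the reduction does not transfer cleanly, I would first try an induction on $k$ that uses only the monotonicity of these bounds in the rank.

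\textbf{Step 3: strict comparison.} The rank-$(k-1)$ bound is monotone in $k$, so it suffices to compare rank $r-1$ against rank $r$.
\begin{itemize}
\item If $r=2s-1$, then rank $r-1=2(s-1)$ with invariants $n_2,\dots,n_r$ gives the bound
\[
\frac{n_s}{p}+n_{s+1}+\cdots+n_r,
\]
where $p$ is the least prime divisor of $n_2$. This is strictly less than $n_s+\cdots+n_r$, since $p\ge2$.
\item If $r=2s$, then rank $2s-1$ with invariants $n_2,\dots,n_r$ gives the bound $n_{s+1}+\cdots+n_r$. This is strictly less than $n_s/p_1+n_{s+1}+\cdots+n_r$, since $n_s/p_1\ge1$.
\end{itemize}
For $r=2$ the comparison is between $n_2$ and $n_1/p_1+n_2$, which is again strict.

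Combining the three steps, $k\le r$ would give $|A|<\beta_{\mathrm{sep}}(G)$, a contradiction. Hence $|\supp(A)|=r+1$, and since $|\supp(A)|\le|G_0|\le r+1$, also $|G_0|=r+1$. This is exactly where $r\ge2$ is used: for $r=1$, Step 3 would compare $n_1$ with the trivial rank-$0$ problem, and the bound coming from a single point, $n_1$, is not strictly smaller.
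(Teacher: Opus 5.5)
Your Step 1 matches the paper, but Step 2 has a genuine gap, and the justification you sketch for it is wrong.

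The relation lattice $L=\{u\in\mathbb{Z}^k : \sum_i u_ig_i=0\}$ is not governed by the $k-1$ differences $g_i-g_1$. Indeed, $\sum_i u_ig_i=(\sum_i u_i)\,g_1+\sum_{i\ge 2}u_i(g_i-g_1)$. So translation only describes the sublattice where $\sum_i u_i=0$, whereas every nontrivial zero-sum sequence has $\sum_i u_i>0$.

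In fact $\mathbb{Z}^k/L\cong\langle G_1\rangle$ can have rank $k$. For $k=r$ and a basis $e_1,\dots,e_r$ of $G$, the set $\{e_1,\dots,e_{r-1},e_1+\cdots+e_r\}$ generates all of $G$. The only valid reduction of this type is that $A$ is a separating atom in the group $\langle G_1\rangle$. That gives $|A|\le\beta_{\mathrm{sep}}(\langle G_1\rangle)$, which can equal $\beta_{\mathrm{sep}}(G)$ and so yields no contradiction. You also leave open how the upper-bound argument would be ``rerun'' to produce the rank-$(k-1)$ formula. Step 3 rests entirely on that unproved bound.

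The missing idea is that what you list as a worry is exactly what makes the corollary short. The upper-bound argument depends explicitly on the number of points, and no change of group is needed. This is the paper's proof.

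Work over $G_1$ with $k=|G_1|$ and the parameters of $G$ itself: $n=n_s$, $T=n_{s+1}+\cdots+n_r$, $M=n(\mathsf{D}^*(nG)-1)$.
\begin{enumerate}
\item Lemma~\ref{lem:B-geodesic} gives $|A|=n|B|+\sum_i x_i\le M+\sum_i x_i$.
\item Since every $t_i\ge 1$, Lemma~\ref{lem:Y-geodesic} gives $|V|=n|Y|+kn-\sum_i x_i\le M+kn-\sum_i x_i$.
\item $|V|\ge|A|$ follows from the lifting argument via Lemma~\ref{lem:n-divisible-lifting}, which applies because $|A|=\beta_{\mathrm{sep}}(G)$.
\item Adding the two bounds yields $2|A|\le 2M+kn$.
\end{enumerate}

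Now insert the exact value of $|A|$ from Theorem~\ref{formulas}.
\begin{itemize}
\item For $r=2s-1$ one has $M=T-(s-1)n$ and $|A|=T+n$, so $k\ge 2s=r+1$.
\item For $r=2s$ one has $M=T-sn$ and $|A|=T+n/p_1$, so $k-2s\ge 2/p_1>0$, hence $k\ge r+1$.
\end{itemize}
Together with $|\supp(A)|\le|G_0|\le r+1$ this finishes the proof, with no comparison across ranks.
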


To establish these results, our approach
conceptually departs from the previous literature
at a crucial technical juncture: it avoids the
Davenport-equality hypothesis relied upon
in \cite{SZZ,SZZ2509}. The core of the proof
separates according to the parity of the rank.
For odd rank, we show that the relevant
auxiliary zero-sum free sequences are positive
geodesics, ensuring their lengths are
unconditionally bounded by the positive diameter
theorem of Klopsch and Lev \cite{KL}. To tackle
the more delicate even-rank case, we reduce the
auxiliary sequences to \emph{geodesic surrogates}.
We then combine this geometric
reduction with a uniform short-generation
lemma for relation groups
to lift divisible relations
without any appeal to Davenport equalities.
Together, these ingredients yield the exact
formulas in both parity cases.

The paper is organized as follows.
Section~\ref{pre} collects the necessary results from
additive combinatorics and invariant theory.
Section~\ref{proofs} contains the proofs of
the main theorem and the corollary.

\section{Preliminaries}
\label{pre}
Throughout this paper,
$G\cong C_{n_1}\oplus \cdots \oplus C_{n_r}$
denotes a finite abelian group with
$1<n_1\mid \cdots \mid n_r$
and $r = \mathsf{rank}(G)$.
As usual, let $\mathbb{N}$ denote
the set of positive integers,
$\mathbb{N}_0 = \mathbb{N} \cup \{0\}$,
and $\mathbb{Z}$ the set of integers.
For any two integers $a, b \in \mathbb{Z}$,
we denote the discrete interval between
them by $[a, b] = \{x \in \mathbb{Z}\,|\,a \le x \le b\}$.

For a non-empty subset $G_0 \subseteq G$,
a finite unordered sequence with terms from $G_0$
is viewed as an element of the free abelian
monoid $\mathcal{F}(G_0)$ with basis $G_0$,
written multiplicatively with concatenation
as the monoid operation.
A sequence $S \in \mathcal{F}(G_0)$
can be uniquely expressed in the exponential form
\[
S = g_1 \cdot \ldots \cdot g_\ell = \prod_{g \in G_0} g^{\mathsf{v}_g(S)},
\]
where $\mathsf{v}_g(S) \in \mathbb{N}_0$ denotes
the multiplicity of $g$ in $S$.
The length of the sequence is
$|S| = \ell = \sum_{g \in G_0} \mathsf{v}_g(S)$.
We say that $T \in \mathcal{F}(G_0)$ is a subsequence
of $S$ if $T$ divides $S$ in $\mathcal{F}(G_0)$
(denoted by $T \mid S$); that is,
$\mathsf{v}_g(T)\leq \mathsf{v}_g(S)$
for all $g\in G_0$.
The sequence obtained by deleting the terms
of $T$ from $S$ is denoted by $T^{-1}S$.

For a sequence $S = g_1 \cdot \ldots \cdot g_\ell$,
its sum is defined as $\sigma(S) = \sum_{i=1}^{\ell} g_i$.
The set of all possible sums of its non-empty subsequences is denoted by
\[
\Sigma(S) = \bigg\{ \sum_{i \in I} g_i
\,\Big{|}\,\emptyset \neq I \subseteq [1, |S|] \bigg\}.
\]
Based on these arithmetic properties, we classify sequences over $G_0$ as follows:
\begin{itemize}
    \item $S$ is called a \emph{zero-sum free sequence} if $0 \notin \Sigma(S)$.
    \item $S$ is called a \emph{zero-sum sequence} if $\sigma(S) = 0$. The collection of all such sequences forms a submonoid of $\mathcal{F}(G_0)$, denoted by
    \[
    \mathcal{B}(G_0) = \{S \in \mathcal{F}(G_0)\,|\,\sigma(S) = 0\}.
    \]
    \item $S$ is called an \emph{atom} (or a minimal zero-sum sequence) if it is a nontrivial zero-sum sequence that contains no proper nontrivial zero-sum subsequences. The set of all atoms over $G_0$ is denoted by $\mathcal{A}(G_0)$.
\end{itemize}

The \emph{Davenport constant} of a subset $G_0$, denoted by $\mathsf{D}(G_0)$, is the maximal length of an atom over $G_0$:
\[
\mathsf{D}(G_0) = \max \{ |S|\,|\,S \in \mathcal{A}(G_0) \}.
\]
It is a classic result that for $|G| \ge 2$, the Davenport constant is bounded below by the structural invariant:
\[
\mathsf{D}(G) \ge \mathsf{D}^*(G) := 1 + \sum_{i=1}^r (n_i - 1).
\]

To connect the arithmetic of zero-sum sequences
to the invariant theory of finite groups,
we consider the algebraic structure of
the zero-sum monoid. For $d \in \mathbb{N}_0$, let
\[
\mathcal{B}(G_0)_d= \{S \in \mathcal{B}(G_0)\,|\,|S| \le d\}
\]
be the subset of zero-sum sequences of length at most $d$. Let $q(\mathcal{B}(G_0))$ denote the quotient group (Grothendieck group) of the commutative cancellative monoid $\mathcal{B}(G_0)$. We write $q(\mathcal{B}(G_0)_d)$ for the subgroup of $q(\mathcal{B}(G_0))$ generated by the elements of $\mathcal{B}(G_0)_d$.

\begin{remark}
\label{rem:qBgd}
{\rm
Our notation differs slightly from that of \cite{SZZ}, where $\mathcal{B}(G_0)_d$ denotes the submonoid generated by the atoms of length at most $d$. The resulting subgroup of $q(\mathcal{B}(G_0))$ is the same in both conventions, because every zero-sum sequence of length at most $d$ factors into atoms of length at most $d$, and conversely, every such atom is itself a zero-sum sequence of length at most $d$.
}
\end{remark}

With this algebraic framework, we define the core combinatorial object of our study. An atom $A \in \mathcal{A}(G_0)$ is called a \emph{separating atom} over $G_0$ if it cannot be generated by strictly shorter zero-sum sequences in the quotient group:
$
A \notin q(\mathcal{B}(G_0)_{|A|-1}).
$
In other words, there do not exist atoms $U_1, \dots, U_k, V_1, \dots, V_l \in \mathcal{A}(G_0)$ with lengths $|U_i|, |V_j| \le |A| - 1$ such that
\[
A \cdot U_1 \cdot \ldots \cdot U_k = V_1 \cdot \ldots \cdot V_l.
\]

The study of the separating Noether number $\beta_{\mathrm{sep}}(G)$ is fundamentally reduced to bounding the lengths of these separating atoms. The following foundational lemma, due to Domokos \cite{Domokos17}, ensures that we only need to test small support sets $G_0$.

\begin{lemma}[{\cite[Lemma 2.3]{SZZ}}]
\label{reduced}
Let $G$ be a finite abelian group. Then $\beta_{\mathrm{sep}}(G)$ is the maximal length of a separating atom over $G_0$, where $G_0 \subseteq G$ ranges over all subsets of size $|G_0| \le \mathsf{rank}(G) + 1$.
\end{lemma}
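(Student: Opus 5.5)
The plan is to translate the invariant-theoretic definition of $\beta_{\mathrm{sep}}(G)$ into a statement about lattices of zero-sum sequences. There are then two reductions: from an arbitrary separating set to monomials, and from arbitrary supports to supports of size at most $\mathsf{rank}(G)+1$.

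\textbf{Step 1 (monomials suffice).} Since $G$ is abelian, every $G$-module $V$ diagonalizes. We may take coordinates $x_1,\dots,x_k$ on which $G$ acts by characters $\chi_1,\dots,\chi_k\in\widehat G\cong G$. A monomial $x^a$ is invariant iff $\sum a_i\chi_i=0$, so invariant monomials correspond to zero-sum sequences over the characters. The invariant ring is spanned by invariant monomials. Hence a separating set of degree $\le d$ exists iff the invariant monomials of degree $\le d$ separate orbits, because every invariant of degree $\le d$ is a linear combination of them. Repeated characters can be merged by a standard argument: substituting or collapsing variables with equal characters does not increase the degree needed. Thus only the set $G_0$ of occurring characters matters.

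\textbf{Step 2 (orbits on a fixed support).} Fix $v,w$ with $Gv\neq Gw$. If the coordinate supports of $v$ and $w$ differ, a power $x_i^{\ord(\chi_i)}$ already separates them, at degree $\le \exp(G)\le\beta_{\mathrm{sep}}(G)$. Otherwise both lie in the torus $(\mathbb{C}^*)^I$ for some index set $I$. For $t$ in this torus, a monomial with exponent vector $a$ takes the value $t^a$. The point $w$ lies in $Gv$ iff $w/v$ lies in the image of $G$, i.e. iff $(w/v)^a=1$ for all $a$ in the lattice $q(\mathcal{B}(G_0))$, where $G_0=\{\chi_i: i\in I\}$; this is character duality for the diagonalizable group. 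Now $(w/v)^a$ equals the ratio of the values of the monomial $x^a$ at $w$ and $v$. So monomials of degree $\le d$ separate all such pairs iff every torus element trivial on $q(\mathcal{B}(G_0)_d)$ is trivial on $q(\mathcal{B}(G_0))$. By duality for subgroups of $\mathbb{Z}^I$, this holds iff $q(\mathcal{B}(G_0)_d)=q(\mathcal{B}(G_0))$.

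\textbf{Step 3 (translation to separating atoms).} Since $q(\mathcal{B}(G_0))$ is generated by atoms, the equality in Step 2 holds iff every atom $A$ lies in $q(\mathcal{B}(G_0)_d)$. Inducting on $|A|$, this is equivalent to $d$ being at least the length of every separating atom over $G_0$. Hence $\beta_{\mathrm{sep}}(G)$ is the maximum length of a separating atom over an arbitrary finite $G_0\subseteq G$.

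\textbf{Step 4 (Helly reduction).} This is the main obstacle: showing that the maximum is already attained with $|G_0|\le \mathsf{rank}(G)+1$. Suppose $q(\mathcal{B}(G_0)_d)\neq q(\mathcal{B}(G_0))$. Then some $t\in(\mathbb{C}^*)^{G_0}$ is trivial on the short relations but $t\notin\mathrm{im}(G)$. I would rephrase the non-separation of $v$ and $w=tv$ as the emptiness of an intersection of cosets in $G$, indexed by the coordinates: $\bigcap_{i}\{g: \chi_i(g)=t_i\}=\emptyset$. Each of these sets is a coset of $\ker\chi_i$. I would then invoke the fact that $G$ has Helly dimension $\mathsf{rank}(G)+1$: any family of cosets with empty intersection has a subfamily of at most $\mathsf{rank}(G)+1$ members with empty intersection. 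Restricting to those coordinates gives $G_0'\subseteq G_0$ with $|G_0'|\le\mathsf{rank}(G)+1$. On $G_0'$ the restricted points are still not in the same orbit, but they are not separated by monomials of degree $\le d$ over $G_0'$, since those are among the original monomials. By Step 3 this yields a separating atom over $G_0'$ of length $>d$. Proving the Helly-dimension bound itself, via induction on subgroup chains in $G$, is where I expect the real work to lie.
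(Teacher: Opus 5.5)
Your proposal is correct and is essentially the standard argument. The paper does not prove this lemma; it imports it from \cite[Lemma~2.3]{SZZ}, which rests on Domokos's characterization of separating sets of monomials via relation groups on coordinate subsets of size at most the Helly dimension \cite[Theorem~2.1, Corollary~2.6]{Domokos17}. Your Steps~1--4 (monomials suffice, torus duality on a fixed support, induction on $|A|$ to pass to separating atoms, Helly reduction) reconstruct exactly that argument. One small correction: in Step~4 it is $Gv\neq Gw$, not the non-separation, that becomes the empty intersection of cosets.

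Two points need more than you wrote.

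\textbf{Merging repeated weights.} This is where $d\ge\exp(G)$ enters. If $\chi_1=\chi_2$ has order $q$, you need $e_1-e_2=(1,q-1,0,\dots,0)-(0,q,0,\dots,0)$ to lie in the lattice generated by nonnegative relations of length at most $d$, which requires $q\le d$. Justify $\exp(G)\le d$ by the separating atom $g^{\ord(g)}$ over $\{g\}$, not by $\beta_{\mathrm{sep}}(G)$, to avoid circularity. This is the same device as in Lemma~\ref{lem:uniform-short-generation}.

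\textbf{The Helly bound.} The one external input is $\kappa(G)\le\mathsf{rank}(G)+1$. Citing it puts you on the same footing as the paper, but the route you suggest for proving it would fall short. In a minimal family of cosets with empty intersection, the partial intersections are cosets of a strictly decreasing chain of proper subgroups. That only bounds the family by the length of the longest subgroup chain plus one: this gives $3$ for $C_{p^2}$, where the truth is $2$.

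The rank bound instead comes from two steps. First, split cosets into $p$-primary components. Second, use that in a $p$-group of rank $r$, any intersection $\bigcap_i H_i$ of subgroups already equals the intersection of at most $r$ of them; to see this, dualize and apply the Burnside basis theorem to $\sum_i H_i^{\perp}$. If those at most $r$ cosets meet, their intersection is a single coset $x_0+\bigcap_i H_i$. Any coset of the family missing $x_0$ is disjoint from it, giving a subfamily of at most $r+1$ members with empty intersection.
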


To effectively bound the lengths of auxiliary zero-sum
free sequences without relying on the Davenport equality $\mathsf{D}(G)=\mathsf{D}^*(G)$, we employ
a geometric perspective. We interpret sequences over a subset
$A\subseteq G$
as positive walks in the corresponding directed Cayley graph.
Let $A \subseteq G$ be a subset.
For $j \in \mathbb{N}$, define the $j$-fold sumset
\[
jA = \{a_1 + \dots + a_j\,|\,a_1, \dots, a_j \in A\},
\]
and set $0A = \{0\}$. For $n \in \mathbb{N}_0$, we denote the accumulated sumset up to length $n$ by
\[
[0, n]A = \bigcup_{j=0}^n jA.
\]
For an element $g \in G$, the \emph{positive length}
of $g$ with respect to $A$ is defined as
the minimal number of terms from $A$ required to sum to $g$:
\[
\ell_A^{+}(g) = \min\{\, n \in \mathbb{N}_0\,|\,g \in [0, n]A \,\},
\]
with the convention that $\ell_A^{+}(g) = \infty$
if $g \notin \langle A \rangle$.
If $\langle A \rangle = G$,
the \emph{positive diameter} of $G$ with respect to $A$ is
\[
\operatorname{diam}_A^{+}(G) =
\max\{\, \ell_A^{+}(g)\,|\,g \in G \,\}.
\]
Taking the supremum over all possible generating sets,
the \emph{absolute positive diameter} of $G$ is defined as
\[
\operatorname{diam}^{+}(G) =
\max\{\, \operatorname{diam}_A^{+}(G)\,|\,
A \subseteq G, \ \langle A \rangle = G \,\}.
\]
If $S$ is a sequence whose terms all lie
in $A \subseteq G$, we say that $S$
is \emph{geodesic with respect to $A$}
if its length perfectly matches the
shortest possible path to its sum, i.e.,
\[
|S| = \ell_A^{+}\bigl(\sigma(S)\bigr).
\]

\begin{remark}
{\rm
For completeness regarding the trivial group, we adopt the standard convention that $\beta_{\mathrm{sep}}(\{0\}) = 1$, $\operatorname{diam}^{+}(\{0\}) = 0$, and $\mathsf{D}^{*}(\{0\}) = 1$.
}
\end{remark}

The fundamental link connecting this geometric diameter back to the structural constants of finite abelian groups is the following theorem by Klopsch and Lev \cite{KL}, which serves as an unconditional upper bound for our geodesic sequences.

\begin{lemma}
\label{lem:positive-diameter}
For every generating set $A \subseteq G$ and every $g \in G$, we have
\[
\ell_A^{+}(g) \le \operatorname{diam}_A^{+}(G) \le \operatorname{diam}^{+}(G) = \mathsf{D}^{*}(G) - 1.
\]
In particular, if $H \le G$ is a subgroup and $\langle A \rangle = H$, then for every $g \in H$,
\[
\ell_A^{+}(g) \le \mathsf{D}^{*}(H) - 1 \le \mathsf{D}^{*}(G) - 1.
\]
\end{lemma}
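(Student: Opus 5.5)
The first two inequalities follow from the definitions, the equality is the positive-diameter theorem of Klopsch and Lev \cite{KL}, and the ``in particular'' clause comes from applying that theorem inside $H$ together with a monotonicity property of $\mathsf{D}^{*}$.

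\emph{The two inequalities.} If $\langle A\rangle=G$, every $g\in G$ lies in some $[0,n]A$, so $\ell_A^{+}(g)<\infty$. Then $\ell_A^{+}(g)\le\operatorname{diam}_A^{+}(G)$ because $\operatorname{diam}_A^{+}(G)$ is a maximum over all $g$. Likewise $\operatorname{diam}_A^{+}(G)\le\operatorname{diam}^{+}(G)$ because $\operatorname{diam}^{+}(G)$ is a maximum over all generating sets.

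\emph{The equality $\operatorname{diam}^{+}(G)=\mathsf{D}^{*}(G)-1$.} I would take this from \cite{KL}, and only indicate its two halves here.
\begin{itemize}
\item For $\ge$, take $A=\{e_1,\dots,e_r\}$ with $\operatorname{ord}(e_i)=n_i$. A positive word over $A$ with sum $g=\sum_i(n_i-1)e_i$ uses $e_i$ a number of times $c_i\equiv n_i-1 \pmod{n_i}$, so $c_i\ge n_i-1$. Hence $\ell_A^{+}(g)=\sum_i(n_i-1)=\mathsf{D}^{*}(G)-1$.
\item For $\le$, fix a generating set $A$ and a geodesic sequence $S$ over $A$. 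The key observation is that $S$ cannot contain a nontrivial relation allowing a shortening. If $T\mid S$ and there is a sequence $T'$ over $A$ with $\sigma(T')=\sigma(T)$ and $|T'|<|T|$, then $T'T^{-1}S$ is strictly shorter with the same sum.
\end{itemize}

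\emph{The main obstacle.} The hard step is turning this rigidity into the bound $|S|\le\sum_i(n_i-1)$, which is exactly the content of \cite{KL}. A naive Davenport-type argument does not suffice, since it would only give $\mathsf{D}(G)-1$. Klopsch and Lev reach $\mathsf{D}^{*}$ by a structural induction. I would attempt it as follows.
\begin{itemize}
\item Choose a cyclic subgroup $C=\langle a\rangle$ with $a\in A$ of maximal order.
\item Use the rigidity to bound the multiplicity of $a$ in $S$ by $\operatorname{ord}(a)-1$.
\item Bound the remaining terms through the image of $S$ in $G/C$, which must again be geodesic modulo suitable exchanges with powers of $a$.
\item Apply induction, using that the invariants of $G/C$ contribute at most $\mathsf{D}^{*}(G)-\operatorname{ord}(a)$.
\end{itemize}
Controlling these exchanges is where the real work lies, and for a complete argument I would defer to \cite{KL}.

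\emph{The subgroup statement.} If $\langle A\rangle=H$, applying the above to $H$ gives $\ell_A^{+}(g)\le\mathsf{D}^{*}(H)-1$ for every $g\in H$. For $\mathsf{D}^{*}(H)\le\mathsf{D}^{*}(G)$, write $H\cong C_{m_1}\oplus\cdots\oplus C_{m_t}$ with $1<m_1\mid\cdots\mid m_t$. By the standard description of subgroups of finite abelian groups (compare $p$-ranks prime by prime), $t\le r$ and $m_i\mid n_{i+r-t}$. Hence $\sum_i(m_i-1)\le\sum_i(n_{i+r-t}-1)\le\sum_i(n_i-1)$. The trivial subgroup is covered by the convention $\mathsf{D}^{*}(\{0\})=1$.
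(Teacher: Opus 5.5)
Your proposal matches the paper's proof. The paper likewise takes the equality $\operatorname{diam}^{+}(G)=\mathsf{D}^{*}(G)-1$ from \cite{KL}, reads the two inequalities off the definitions, and gets the subgroup clause by applying the result inside $H$ together with $\mathsf{D}^{*}(H)\le\mathsf{D}^{*}(G)$, which you justify correctly via $t\le r$ and $m_i\mid n_{i+r-t}$; your basis example for the lower bound and your induction outline for the upper bound (rightly deferred to \cite{KL}) are extra and not needed for the lemma.
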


\begin{proof}
The exact evaluation of the absolute positive diameter, $\operatorname{diam}^{+}(G) = \mathsf{D}^{*}(G) - 1$, is established in \cite[Theorem~2.1]{KL}. The remaining inequalities follow immediately from the definitions and the subgroup structure.
\end{proof}

With the geometric tools in place to control the lengths of auxiliary sequences, we need an algebraic mechanism to lift these sequences back to the invariant-theoretic setting of separating atoms. The following lemma, which adapts a crucial insight from \cite[Lemma~3.1]{SZZ}, provides exactly this bridge. It guarantees that any zero-sum sequence whose multiplicities are uniformly divisible by a critical structural parameter $n$ can be automatically generated by strictly shorter zero-sum sequences in the quotient group.

\begin{lemma}[{\cite[Lemma~3.1]{SZZ}}]
\label{lem:n-divisible-lifting}
Let
$
G\cong C_{n_1}\oplus \cdots \oplus C_{n_r}
$
with
$
1<n_1\mid \cdots \mid n_r
$
and
$
r\ge 2.
$
Set
$
s=\Bigl\lfloor \frac{r+1}{2}\Bigr\rfloor
$
and
$
n=n_s.
$
Let $G_0\subseteq G$ be a nonempty subset, and let $A$ be a separating atom over $G_0$ with maximal length
$
|A|=\beta_{\mathrm{sep}}(G).
$
If $S\in \mathcal{B}(G_0)$ is a zero-sum sequence such that
$
n \mid \mathsf{v}_g(S)
$
for every
$
g\in G_0,
$
then $S$ is generated by strictly shorter sequences in the quotient group, meaning
$
S\in q\!\bigl(\mathcal{B}(G_0)_{|A|-1}\bigr).
$
\end{lemma}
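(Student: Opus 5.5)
The plan is to decompose $S$ as a combination of the basic relations $g^{n\,\ord(g)/\gcd(n,\ord(g))}$-type pieces and of $n$-fold multiples of zero-sum sequences of short length. Concretely, write $\mathsf{v}_g(S)=n m_g$ and let $T=\prod_{g\in G_0} g^{m_g}$, so that $S=T^{n}$ formally and $n\,\sigma(T)=0$; that is, $\sigma(T)\in G[n]$, the $n$-torsion subgroup. The idea is that the relation $S$ is ``divisible by $n$'', and $n$-fold copies of short relations only cost length $n\cdot(\text{short})$. This should fit below $|A|=\beta_{\mathrm{sep}}(G)$, whose value (lower bound, from explicit constructions) is at least $n_s+\cdots+n_r$ in odd rank and $n_s/p_1+n_{s+1}+\cdots+n_r$ in even rank.

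\textbf{Step 1.} Reduce $T$ modulo short zero-sum pieces. Any zero-sum subsequence $U\mid T$ of length at most $\mathsf{D}(G)$ gives $U^{n}$, which factors in $q(\mathcal{B}(G_0))$ as $U\cdot U\cdots U$, each of length $|U|$. So we need only $|U|\le |A|-1$. Greedily removing atoms from $T$ leaves $T=U_1\cdots U_k\cdot W$ with $W$ zero-sum free, and $S=\prod U_i^{n}\cdot W^{n}$. Here each $U_i\in\mathcal{A}(G_0)$ lies in $q(\mathcal{B}(G_0)_{|A|-1})$ provided $|U_i|<|A|$. If some atom $U_i$ has length $|A|$, it is either non-separating, hence in the group anyway, or separating, which is fine as long as we avoid it. This subtlety must be handled by choosing the decomposition of $T$ differently, for instance by using that $A$ has maximal length.

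\textbf{Step 2.} Treat $W^{n}$, where $W$ is zero-sum free with $\sigma(W)=h\in G[n]$. Since $G[n]\cong C_{n_1'}\oplus\cdots$ with all exponents dividing $n$, we have $h$ of order dividing $n$. Write $W^{n}=(W^{d})^{n/d}$ with $d=\ord(h)$, so $W^{d}$ is zero-sum. The aim is to write $W^{d}$ as a product of zero-sum sequences $V_j$ with $n$-divisible structure, or to express $W^n$ via ``shifting'': choose a term $g\mid W$ and compare $W^{n}$ with $(g^{-1}W)^{n}\cdot g^{n}$. Then induct on $|W|$, using that $g^{\ord(g)}$ is short (length $\le n_r\le|A|-1$ when $r\ge2$). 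Together with the factorization of $W^{n}$ into atoms of $\mathcal{B}(\supp W)$, each of length at most $\mathsf{D}(\langle \supp W\rangle)$, this gives the decomposition. The required inequality is that the atoms arising have length at most $|A|-1$. These atoms divide $W^{n}$, and one bounds them by $n$ times a zero-sum-free piece plus a bounded correction.

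\textbf{Main obstacle.} The key comparison in Step 2 is that an atom dividing $W^{n}$ with $W$ zero-sum free can have length up to roughly $n\cdot|W|$. This must be kept below $\beta_{\mathrm{sep}}(G)$, which is only about $n_s+\cdots+n_r$. I would try to exploit that $\sigma(W)\in G[n]$ and $|W|\le \mathsf{D}(G)-1$ by inducting on $|G_0|$, passing to the subgroup generated by $\supp(W)$ modulo $G[n]$-data, i.e. working in $G/nG$-type quotients where $n_sG$ enters. This is exactly where the original argument of \cite{SZZ} uses its structural input, and I expect this length bound to be the hard step.
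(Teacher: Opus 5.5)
There is a genuine gap. The proposal never supplies the mechanism that makes $n$-divisibility useful, and the decomposition you set up cannot supply it.

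In Step 1 you factor $T=\prod g^{m_g}$ into atoms of $\mathcal{B}(G_0)$, and you need each atom $U$ to lie in $q(\mathcal{B}(G_0)_{|A|-1})$. Such atoms can have length up to $\mathsf{D}(G)\ge 1+\sum_i(n_i-1)$, which in general far exceeds $\beta_{\mathrm{sep}}(G)$, and nothing places them in $q(\mathcal{B}(G_0)_{|A|-1})$. Worse, the central instance of the lemma is $S=A^{n}$ (or $A^{hn}$, as used in Lemma~\ref{lem:V-lower-coprime}). There $T=A$, your greedy factorization returns $U_1=A$, and the reduction demands $A\in q(\mathcal{B}(G_0)_{|A|-1})$. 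That is exactly what separability of $A$ forbids, so Step 1 is circular precisely where the lemma matters.

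Step 2 has the same defect. The sequences $(g^{-1}W)^n$ and $g^n$ are in general not zero-sum (their sums are $\mp ng$), so comparing $W^n$ with their product gives no identity in $q(\mathcal{B}(G_0))$. Atoms of $\mathcal{B}(\supp W)$ dividing $W^n$ again have uncontrolled length. The ``main obstacle'' you flag is the whole content of the lemma, and it is left open.

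The missing idea is to take the $n$-th root in the right group. $T$ need not be zero-sum in $G$, but $\prod_g (ng)^{m_g}$ is zero-sum in $nG$. Write $G_0=\{g_1,\dots,g_k\}$ and $S=\prod g_i^{ns_i}$; then $\sigma(S)=0$ if and only if $\sum_i s_i(ng_i)=0$ in $nG$. Every nonnegative relation $\mathbf{u}$ among $ng_1,\dots,ng_k$ in $nG$ lifts to the $n$-divisible zero-sum sequence $\prod g_i^{nu_i}$, of length $n|\mathbf{u}|$. Integer identities among relation vectors lift to identities of sequences. So it suffices to write $\mathbf{s}$ as an integer combination of nonnegative relations with $n|\mathbf{u}|\le |A|-1$.

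Factoring $\mathbf{s}$ into minimal relations gives $|\mathbf{u}|\le \mathsf{D}(nG)$. This is enough when $\mathsf{D}(nG)=\mathsf{D}^*(nG)$, because $n\,\mathsf{D}^*(nG)\le n_{s+1}+\cdots+n_r<|A|$ by Lemma~\ref{lower-bound}.

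Without that Davenport hypothesis, you use Domokos' theorem that the relation group is generated by nonnegative relations of length at most $\beta_{\mathrm{sep}}(nG)$. This is precisely Lemma~\ref{lem:uniform-short-generation} together with Corollary~\ref{cor:d-divisible-lifting} for $d=n$. You then need the bound $n\,\beta_{\mathrm{sep}}(nG)\le n_{s+1}+\cdots+n_r$. It follows, for instance, by induction on $|G|$ applied to $nG$, whose invariants are the $n_j/n$ with $n_j>n$; the index computation is the same as in Lemma~\ref{lem:n1-rank-shift}.

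The paper itself does not reprove the statement; it imports it from \cite[Lemma~3.1]{SZZ}. The route that works, and the one the paper's Corollary~\ref{cor:d-divisible-lifting} formalizes, is this passage to relations in $nG$, not a factorization into atoms inside $\mathcal{B}(G_0)$.
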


\begin{proof}
This is equivalent to \cite[Lemma~3.1]{SZZ} after invoking the maximality $|A|=\beta_{\mathrm{sep}}(G)$ and adopting the subgroup generation convention detailed in Remark~\ref{rem:qBgd}.
\end{proof}

Finally, to complete the setup for our exact formulas, we state the established general lower bound for the separating Noether number. Our primary task in the subsequent sections will be to prove that the lengths of separating atoms are strictly bounded from above by these exact quantities.

\begin{lemma}[{\cite[Lemmas 5.2 and 5.5]{Sch25}}]
\label{lower-bound}
Let $G \cong C_{n_1} \oplus \cdots \oplus C_{n_r}$ with $1 < n_1 \mid \dots \mid n_r$. Let $s = \lfloor \frac{r+1}{2} \rfloor$, and let $p_1$ be the minimal prime divisor of $n_1$. Then the separating Noether number satisfies
\[
\beta_{\mathrm{sep}}(G) \ge
\begin{cases}
n_s + n_{s+1} + \cdots + n_r, & \text{if } r \text{ is odd}, \\
\frac{n_s}{p_1} + n_{s+1} + \cdots + n_r, & \text{if } r \text{ is even}.
\end{cases}
\]
\end{lemma}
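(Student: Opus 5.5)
The statement is a lower bound, so I would prove it by an explicit construction. By Lemma~\ref{reduced} it suffices to exhibit, inside $G$, a subset $G_0$ with $|G_0|\le r+1$ and a separating atom $A$ over $G_0$ of the stated length.

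\emph{Setting up the framework.} Identify $q(\mathcal{B}(G_0))$ with the relation lattice
\[
L(G_0)=\Bigl\{x\in\mathbb{Z}^{G_0}\,\Big|\,\sum_{g} x_g g=0\Bigr\}.
\]
Under this identification, sequences in $\mathcal{B}(G_0)$ are the vectors of $L(G_0)\cap\mathbb{N}_0^{G_0}$ and length is the coordinate sum. To show that an atom $A$ is separating, I would produce a group homomorphism $\psi\colon L(G_0)\to C_m$ (or to $\mathbb{Z}$) that vanishes on every nonnegative lattice vector of coordinate sum at most $|A|-1$ but satisfies $\psi(A)\neq 0$. This is the standard certificate that $A\notin q(\mathcal{B}(G_0)_{|A|-1})$.

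\emph{Odd rank, $r=2s-1$.} Fix a basis $e_1,\dots,e_r$ with $\ord(e_i)=n_i$. The idea is to let the $s$ ``large'' coordinates $e_s,\dots,e_r$ carry the length, and to use the $s-1$ ``small'' coordinates $e_1,\dots,e_{s-1}$ as twisting components that destroy short relations. Concretely, I would pair coordinate $i\le s-1$ with coordinate $r+1-i\ge s+1$. I would take $G_0=\{g_s,\dots,g_r,g_0\}$, where each $g_j$ is $e_j$ perturbed by a small-coordinate term, and
\[
g_0=-(g_s+\cdots+g_r)+(\text{correction}),
\]
chosen so that:
\begin{enumerate}
\item[(a)] the vector $A=(n_s,\dots,n_r,\ast)$ of total length $n_s+\cdots+n_r$ is a minimal nonnegative relation, that is, an atom;
\item[(b)] every nonnegative relation of smaller length is forced into a proper sublattice $L'\subsetneq L(G_0)$ that does not contain $A$.
\end{enumerate}
Then $\psi$ is the quotient map $L(G_0)\to L(G_0)/L'$. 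Verifying (b) amounts to a case analysis on the support of a short relation, using the divisibility $n_i\mid n_{r+1-i}$.

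\emph{Even rank, $r=2s$.} Here there is one unpaired small coordinate, $e_1$, which I would absorb into the $s$-th block. I would replace $n_s$ by $n_s/p_1$, using an element whose order in the relevant quotient is exactly $n_s/p_1$. The homomorphism $\psi$ would then take values in $C_{p_1}$, detecting a residue of $A$ modulo $p_1$ that no short relation can produce. The minimality of $p_1$ is exactly what prevents short relations from reaching that residue.

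\emph{Main obstacle.} The crux is step (b): designing $G_0$ so that \emph{every} short zero-sum sequence lies in the kernel of a single $\psi$, while $A$ itself is still an atom. I would first test the construction on rank $2$ and $3$, where the lattices are small enough to enumerate, and then generalize using the block-diagonal structure coming from the pairing $i\leftrightarrow r+1-i$.
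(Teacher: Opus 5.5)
You have the certificate idea right: a homomorphism on $q(\mathcal{B}(G_0))$ that kills $\mathcal{B}(G_0)_{|A|-1}$ but not $A$ is a valid witness. But there is a genuine gap. The paper does not prove this lemma; it imports it from \cite[Lemmas~5.2 and~5.5]{Sch25}. So the whole content is an explicit construction, and that is what your proposal omits. The perturbations of the $e_j$, the ``correction'' in $g_0$ and the sublattice $L'$ are never specified, and step (b) is deferred to experimentation.

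Moreover, the shape you sketch cannot be completed:
\begin{itemize}
\item If $g_j=e_j+w_j$ with $w_j\in\langle e_1,\dots,e_{s-1}\rangle$, then $\ord(g_j)\mid n_j$ because $n_i\mid n_j$. So multiplicity $n_j$ on $g_j$ makes $g_j^{n_j}$ a proper zero-sum subsequence, and (a) fails. Recall that an atom $A\ne g^{\ord(g)}$ has $\mathsf{v}_g(A)\le\ord(g)-1$.
\item Multiplicities $n_s,\dots,n_r$ together with total length $n_s+\dots+n_r$ leave multiplicity $0$ for $g_0$.
\item $|G_0|=s+1$ is too small once $r\ge 3$. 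The estimate $2|A|\le 2T+(k-2s+2)n$ from Case~1 of the proof of Corollary~\ref{cor:conj54-resolved} shows that a separating atom of length $n_s+\dots+n_r$ needs $k\ge 2s=r+1$ distinct terms. That estimate applies because such an $A$ would have length $\beta_{\mathrm{sep}}(G)$ by the upper bound.
\end{itemize}
What works is to let each large coordinate appear twice, once plain and once twisted.

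\textbf{Odd rank.} For $r=2s-1\ge 3$ (the case $r=1$ is trivial), put $a_s=e_s-(e_1+\dots+e_{s-1})$ and $a_{s+i}=e_{s+i}+e_i$ for $1\le i\le s-1$. Take $G_0=\{e_s,\dots,e_r,a_s,\dots,a_r\}$ and $A=\prod_{j=s}^r e_j^{n_j-1}a_j$, so $|A|=n_s+\dots+n_r$ and $|G_0|=r+1$. Write $u_j=\mathsf{v}_{a_j}(S)$. A sequence $S$ over $G_0$ is zero-sum iff $u_{s+i}\equiv u_s\pmod{n_i}$ for $1\le i\le s-1$ and $\mathsf{v}_{e_j}(S)\equiv -u_j\pmod{n_j}$ for $s\le j\le r$. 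If $n_1\nmid u_s$, then $n_1\nmid u_j$, hence $n_j\nmid u_j$, for every $j$. So each pair contributes $\mathsf{v}_{e_j}(S)+u_j\ge n_j$, and $|S|\ge|A|$. Thus $\psi\colon q(\mathcal{B}(G_0))\to\mathbb{Z}/n_1\mathbb{Z}$, $S\mapsto u_s \bmod n_1$, kills $\mathcal{B}(G_0)_{|A|-1}$, while $\psi(A)=1$. The same congruences with $u_j\in\{0,1\}$ show that $A$ is an atom.

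\textbf{Even rank.} For $r=2s$, take $a_{s+1}=e_{s+1}-(e_1+\dots+e_{s-1})-\frac{n_s}{p}e_s$ and $a_{s+1+i}=e_{s+1+i}+e_i$. Let $A=e_s^{n_s/p}\prod_{j=s+1}^{r}e_j^{n_j-1}a_j$, which has $r+1$ distinct terms. Now $p\nmid u_{s+1}$ also forces $\mathsf{v}_{e_s}(S)\ge n_s/p$, and $S\mapsto u_{s+1}\bmod p$ is the certificate.

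This works for \emph{every} prime $p\mid n_1$. So minimality of $p_1$ only maximizes $n_s/p$; it is not, as you claim, what blocks short relations. Minimality matters only for the upper bound.
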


\section{Proofs of Theorem \ref{formulas} and
Corollary \ref{cor:conj54-resolved}}
\label{proofs}

In this section, we prove our main results:
the exact formulas for
$\beta_{\mathrm{sep}}(G)$ and the
rank $\ge 2$ case of the conclusion conjectured
in \cite[Conjecture~5.4]{SZZ2509}.
To sharply bound the length of a maximal separating atom $A$,
we decompose it by dividing the multiplicities
of its terms by the critical parameter
$n = n_{\lfloor(r+1)/2\rfloor}$.
This division extracts
a ``quotient sequence'' $B$ over the subgroup $nG$.
The following key lemma demonstrates
that $B$ is a  geodesic  sequence in $nG$,
which intrinsically bounds its length by the
Davenport constant of the subgroup.

\medskip
\begin{lemma}
\label{lem:B-geodesic}
Let
$
G\cong C_{n_1}\oplus \cdots \oplus C_{n_r},
$
where
$
1<n_1\mid \cdots \mid n_r,
$
$
r\ge 2,
$
$s=\lfloor (r+1)/2\rfloor$ and $n=n_s$.
By Lemma \ref{reduced}, choose a subset
$
G_0=\{g_1,\dots,g_k\}\subseteq G
$
with
$
k\le r+1
$
such that
$
A=\prod_{i=1}^k g_i^{m_i}
$
is a separating atom
over $G_0$ satisfying $|A|=\beta_{\mathrm{sep}}(G)$.
Write
$
m_i=nk_i+x_i,
~
k_i\in \N_0,\  x_i\in [0,n-1]
$
for each $i$, and set
\[
h_i=ng_i\in nG,
\qquad
B=\prod_{i=1}^k h_i^{k_i}.
\]
Then $B$ is geodesic in $nG$ with
respect to the set $\{h_1,\dots,h_k\}$:
$
|B|=\ell_{\{h_1,\dots,h_k\}}^{+}\!\bigl(\sigma(B)\bigr).
$
Consequently,
$
|B|\le \mathsf{D}^{*}(nG)-1.
$
\end{lemma}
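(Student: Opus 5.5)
Argue by contradiction: suppose $B$ is not geodesic with respect to $H_0=\{h_1,\dots,h_k\}$. Then there is a sequence $C=\prod_{i=1}^k h_i^{c_i}$ with $\sigma(C)=\sigma(B)$ and $|C|<|B|$, i.e.\ $\sum_i c_i<\sum_i k_i$. Translating back to $G_0$, consider
\[
A'=\prod_{i=1}^k g_i^{nc_i+x_i}.
\]
Since $\sum_i nc_ig_i=\sigma(C)=\sigma(B)=\sum_i nk_ig_i$, we get $\sigma(A')=\sigma(A)=0$, so $A'\in\mathcal{B}(G_0)$ and $|A'|=|A|-n(|B|-|C|)\le |A|-n<|A|$. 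Hence $A'\in q(\mathcal{B}(G_0)_{|A|-1})$, since $A'$ factors into atoms of length at most $|A|-1$ (Remark~\ref{rem:qBgd}).

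Next, relate $A$ and $A'$ in the quotient group. Write $A=A'\cdot\prod_i g_i^{n k_i}\cdot\prod_i g_i^{-nc_i}$ in $q(\mathcal{F}(G_0))$, or more concretely
\[
A\cdot \prod_{i} g_i^{nc_i}=A'\cdot\prod_i g_i^{nk_i}.
\]
The sequences $\prod_i g_i^{nc_i}$ and $\prod_i g_i^{nk_i}$ need not be zero-sum, so adjust them. Take $D=\prod_i g_i^{nk_i}\cdot\prod_i g_i^{n(\operatorname{ord}(g_i)/\gcd)\cdots}$; more cleanly, multiply both sides by $T=\prod_i g_i^{n c_i (N-1)}$, where $N=\exp(G)$. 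Then $\prod_i g_i^{nc_i}\cdot T=\prod_i g_i^{nNc_i}$ is zero-sum with all multiplicities divisible by $n$. Likewise $\prod_i g_i^{nk_i}\cdot T$ has sum $\sigma(B)-\sigma(C)+N\sigma(C)\cdot$(something $\equiv 0$), which is $0$, and all its multiplicities are divisible by $n$. The resulting identity
\[
A\cdot S_1=A'\cdot S_2
\]
holds in $\mathcal{B}(G_0)$, with $S_1,S_2\in\mathcal{B}(G_0)$ having every multiplicity divisible by $n$. By Lemma~\ref{lem:n-divisible-lifting}, both $S_1$ and $S_2$ lie in $q(\mathcal{B}(G_0)_{|A|-1})$, and $A'$ does as well. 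Therefore $A=A'S_2S_1^{-1}\in q(\mathcal{B}(G_0)_{|A|-1})$, contradicting that $A$ is a separating atom. So $B$ is geodesic.

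For the consequence, let $H=\langle h_1,\dots,h_k\rangle\le nG$. Since $B$ is geodesic, $|B|=\ell^+_{H_0}(\sigma(B))$, and Lemma~\ref{lem:positive-diameter} applied to the generating set $H_0$ of $H$ gives $|B|\le \mathsf{D}^*(H)-1$. It remains to check $\mathsf{D}^*(H)\le\mathsf{D}^*(nG)$ for a subgroup $H\le nG$. This is standard: the invariant factors of a subgroup divide componentwise those of the ambient group after aligning from the top, and $\sum(m_i-1)$ is monotone under that relation. It is also what the ``in particular'' clause of Lemma~\ref{lem:positive-diameter} asserts with $G$ replaced by $nG$.

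The main obstacle is the bookkeeping in the contradiction step: verifying that the padding sequence $T$ makes both auxiliary sequences genuinely zero-sum with multiplicities divisible by $n$. All the substantive work is absorbed by Lemma~\ref{lem:n-divisible-lifting}. If the padding computation fails, the fallback is to take $T=\prod_i g_i^{n(N-1)(c_i+k_i)}$. Then $S_1=\prod_i g_i^{nc_i}T$ and $S_2=\prod_i g_i^{nk_i}T$, and $\sigma(S_1)=n\sigma(C)+ (N-1)n(\sigma(C)+\sigma(B))=Nn\sigma(C)+(N-1)\sigma(B)\cdot n$. Since $\sigma(B)=\sigma(C)$, this equals $(2N-1)\sigma(C)\cdot$ in effect $N(\cdot)-\sigma(C)+\dots$. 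The bookkeeping should be settled by choosing exponents so that each total is a multiple of $N$ times the multiplicities.
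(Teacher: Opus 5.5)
Your proof is correct and follows the paper's argument: your $A'$ is the paper's $\widetilde{A}$, and both proofs write $A$ as $A'$ times a quotient of two $n$-divisible zero-sum sequences and then invoke Lemma~\ref{lem:n-divisible-lifting}. The paper builds those correction sequences from the positive and negative parts of $k_i-c_i$ (in your notation), plus an auxiliary $n$-divisible sequence that cancels their common $nG$-sum; your padding by $(N-1)$ copies of $\prod_i g_i^{nc_i}$ with $N=\exp(G)$ does the same job more directly.

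Your first padding already works, since $\sigma\bigl(\prod_i g_i^{nk_i}\cdot T\bigr)=\sigma(B)+(N-1)\sigma(C)=N\sigma(C)=0$. So delete the hedging, the abandoned ``$D$'' sentence, and the ``fallback'' paragraph. That paragraph's computation is garbled: it writes $n\sigma(C)$ where $\sum_i nc_ig_i=\sigma(C)$ is meant.
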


\begin{proof}
First note that not all $x_i$ are zero.
Indeed, if $x_i=0$ for all $i$,
then $A$ is a zero-sum sequence all of whose
multiplicities are divisible by $n$.
Lemma~\ref{lem:n-divisible-lifting} yields
$
A\in
q\!\bigl(\mathcal{B}(G_0)_{|A|-1}\bigr),
$
contradicting that $A$ is separating.
Hence
$
A^{(1)}=\prod_{i=1}^k g_i^{nk_i}
$
is a proper subsequence of the atom $A$,
and $A^{(1)}$ is zero-sum free in $G$.
We claim that
$
B=\prod_{i=1}^k h_i^{k_i}
$
is zero-sum free in $nG$. Indeed, if
$
T=\prod_{i=1}^k h_i^{u_i}
$
were a nontrivial zero-sum subsequence of $B$,
where $0\le u_i\le k_i$ for all $i$, then
$
\sum_{i=1}^k u_i h_i=0
$
would imply
$
\sum_{i=1}^k (nu_i)g_i=0.
$
It follows that
$
\prod_{i=1}^k g_i^{nu_i}
$
would be a nontrivial zero-sum subsequence
of $A^{(1)}$, a contradiction.
Therefore $B$ is zero-sum free in $nG$.
Assume, for contradiction, that $B$ is not geodesic.
Then there exist $u_1,\dots,u_k\in \N_0$ such that
\[
\sum_{i=1}^k u_i h_i=\sum_{i=1}^k k_i h_i
\qquad\text{and}\qquad
\sum_{i=1}^k u_i < \sum_{i=1}^k k_i.
\]
Define
$
\widetilde{A}=\prod_{i=1}^k g_i^{nu_i+x_i}.
$
Since
\[
\sum_{i=1}^k (nu_i+x_i)g_i
=
\sum_{i=1}^k u_i h_i+\sum_{i=1}^k x_i g_i
=
\sum_{i=1}^k k_i h_i+\sum_{i=1}^k x_i g_i
=
\sum_{i=1}^k m_i g_i
=
0,
\]
the sequence $\widetilde{A}$ is zero-sum.
Its length is
\[
|\widetilde{A}|=n\sum_{i=1}^k u_i+\sum_{i=1}^k x_i
<
n\sum_{i=1}^k k_i+\sum_{i=1}^k x_i
=
|A|,
\]
hence
$
\widetilde{A}\in \mathcal{B}(G_0)_{|A|-1}.
$
Now define
\[
p_i=(k_i-u_i)^{+}=\max\{k_i-u_i,0\},
\qquad
q_i=(u_i-k_i)^{+}=\max\{u_i-k_i,0\},
\]
and define the $n$-divisible sequences
\[
P=\prod_{i=1}^k g_i^{np_i},
\qquad
Q=\prod_{i=1}^k g_i^{nq_i}.
\]
Since
$
k_i+q_i=u_i+p_i
~\text{for all }i,
$
we have
\[
\sum_{i=1}^k p_i h_i=\sum_{i=1}^k q_i h_i=:s\in nG.
\]
Let $L=\langle h_1,\dots,h_k\rangle\le nG$.
Then there exist $c_1,\dots,c_k\in \N_0$ such that
$
\sum_{i=1}^k c_i h_i=-s.
$
Set
$
C=\prod_{i=1}^k g_i^{nc_i}.
$
Then both $PC$ and $QC$ are zero-sum sequences over $G_0$, and all their multiplicities are divisible by $n$.
Lemma~\ref{lem:n-divisible-lifting} therefore gives
$
PC,\ QC \in q\!\bigl(\mathcal{B}(G_0)_{|A|-1}\bigr).
$
Finally, using $k_i+q_i=u_i+p_i$,
we get the exact identity of sequences
$
A\cdot Q\cdot C = \widetilde{A}\cdot P\cdot C.
$
Hence, in the quotient group $q(\mathcal{B}(G_0))$,
\[
A
=
\widetilde{A}\cdot (PC)\cdot (QC)^{-1}
\in
q\!\bigl(\mathcal{B}(G_0)_{|A|-1}\bigr),
\]
because $\widetilde{A}\in \mathcal{B}(G_0)_{|A|-1}$ and $PC,QC\in q(\mathcal{B}(G_0)_{|A|-1})$.
This contradicts the fact that $A$ is a separating atom.
Thus $B$ is geodesic.
Applying Lemma~\ref{lem:positive-diameter} to the subgroup $L=\langle h_1,\dots,h_k\rangle\le nG$ yields
\[
|B|=\ell_{\{h_1,\dots,h_k\}}^{+}\!\bigl(\sigma(B)\bigr)
\le \mathsf{D}^{*}(L)-1
\le \mathsf{D}^{*}(nG)-1.
\]
\end{proof}

\medskip
To extract deeper structural constraints
on the maximal separating atom $A$,
we generalize the previous decomposition
by scaling its multiplicities by an arbitrary
integer $l \in \N$.  Dividing the scaled multiplicities
$l m_i$ by $n$ yields
a new set of remainders $x_i^{(l)}$.
To balance these remainders and form a zero-sum,
we construct a minimal compensating
sequence $Y^{(l)}$ in the subgroup $nG$.
The minimality condition intrinsically
forces $Y^{(l)}$ to be a geodesic sequence,
ensuring its length is strictly
bounded by $\mathsf{D}^*(nG) - 1$.

\medskip
\begin{lemma}
\label{lem:Y-geodesic}
Keep the notation of Lemma~\ref{lem:B-geodesic}. For each $l\in \N$, write
\[
l m_i = n k_i^{(l)} + x_i^{(l)},
\quad
k_i^{(l)}\in \N_0,
\quad
x_i^{(l)}\in [0,n-1]
\quad \hbox{for}~1\le i\le k.
\]
Choose $(t_1^{(l)},\dots,t_k^{(l)})\in \N^k$
with $\sum_{i=1}^kt_i^{(l)}$ minimal subject to
$
\sum_{i=1}^k \bigl(t_i^{(l)} n-x_i^{(l)}\bigr)g_i=0,
$
and define
\[
Y^{(l)}=\prod_{i=1}^k h_i^{\,t_i^{(l)}-1}
=
\prod_{i=1}^k (ng_i)^{\,t_i^{(l)}-1}.
\]
Then $Y^{(l)}$ is geodesic in $nG$ with respect to the
 set $\{h_1,\dots,h_k\}$.
Consequently,
$
|Y^{(l)}|\le \mathsf{D}^{*}(nG)-1.
$
\end{lemma}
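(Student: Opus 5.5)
The plan is the same minimality-to-geodesic argument as in Lemma~\ref{lem:B-geodesic}, but simpler, because minimality is built into the choice of the $t_i^{(l)}$ and no lifting via Lemma~\ref{lem:n-divisible-lifting} is needed. Throughout, write $x_i=x_i^{(l)}$, $k_i'=k_i^{(l)}$, $t_i=t_i^{(l)}$ and $L=\langle h_1,\dots,h_k\rangle\le nG$.

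First I check that the minimum defining $t^{(l)}$ is over a nonempty set. Since $x_i=lm_i-nk_i'$ and $\sum_i m_ig_i=\sigma(A)=0$, for any $(t_1,\dots,t_k)$ we have
\[
\sum_{i=1}^k (t_in-x_i)g_i=\sum_{i=1}^k (t_i+k_i')h_i-l\sigma(A)=\sum_{i=1}^k (t_i+k_i')h_i .
\]
Let $M$ be a common multiple of the orders $\ord(h_i)$ with $M>\max_i k_i'$, and take $t_i=M-k_i'\ge 1$. Then every coefficient $t_i+k_i'=M$ kills $h_i$, so the condition holds and a minimizer exists. The same identity also shows that
\[
\sigma\bigl(Y^{(l)}\bigr)=\sum_{i=1}^k (t_i-1)h_i=-\sum_{i=1}^k x_ig_i-\sum_{i=1}^k h_i+\sum_{i=1}^k t_ih_i ,
\]
which is only needed to see that the constraint can be rewritten purely in terms of $\sum_i(t_i-1)h_i$.

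Next, suppose $Y^{(l)}$ is not geodesic. Then there are $u_1,\dots,u_k\in\N_0$ with
\[
\sum_{i=1}^k u_ih_i=\sum_{i=1}^k (t_i-1)h_i
\qquad\text{and}\qquad
\sum_{i=1}^k u_i<\sum_{i=1}^k (t_i-1).
\]
Set $t_i'=u_i+1\in\N$. Then $\sum_i t_i'h_i=\sum_i t_ih_i$, and therefore
\[
\sum_{i=1}^k (t_i'n-x_i)g_i=\sum_{i=1}^k t_i'h_i-\sum_{i=1}^k x_ig_i=\sum_{i=1}^k (t_in-x_i)g_i=0 .
\]
So $(t_1',\dots,t_k')\in\N^k$ is admissible. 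Moreover $\sum_i t_i'=\sum_i u_i+k<\sum_i t_i$, which contradicts the minimality of $\sum_i t_i$. Hence $|Y^{(l)}|=\ell^{+}_{\{h_1,\dots,h_k\}}\bigl(\sigma(Y^{(l)})\bigr)$, that is, $Y^{(l)}$ is geodesic.

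Finally, $\sigma(Y^{(l)})\in L$, so Lemma~\ref{lem:positive-diameter} applied to the generating set $\{h_1,\dots,h_k\}$ of $L$ gives
\[
|Y^{(l)}|\le \mathsf{D}^{*}(L)-1\le\mathsf{D}^{*}(nG)-1 .
\]
The only point needing care is the shift by one: the positivity constraint $t_i\ge1$ must match the $\N_0$-exponents allowed in positive length. The translation $t_i'=u_i+1$ handles exactly this. Existence of a minimizer, shown in the first step, is the other small point to verify.
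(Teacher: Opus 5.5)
Your proof is correct and is essentially the paper's. The shift $t_i'=u_i+1$ contradicting minimality of $\sum_i t_i$ is exactly the paper's geodesicity argument, and the final bound via Lemma~\ref{lem:positive-diameter} on $L$ is identical. Your nonemptiness check, taking $t_i=M-k_i^{(l)}$ for a large common multiple $M$ of the $\ord(h_i)$, is slightly cleaner than the paper's: the paper first uses Lemma~\ref{lower-bound} and the atom property to get $m_i\le\ord(g_i)-1$, and only then takes $t_i=l\,\ord(g_i)-k_i^{(l)}$.

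One slip: your side display for $\sigma(Y^{(l)})$ has a spurious $-\sum_i x_ig_i$. The correct expression is $\sum_i t_ih_i-\sum_i h_i$, so the constraint reads $\sigma(Y^{(l)})+\sum_i h_i-\sum_i x_ig_i=0$. Nothing in your argument uses that display, so no harm results.
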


\begin{proof}
By Lemma~\ref{lower-bound}, we have
\[
|A|=\beta_{\mathrm{sep}}(G)>n_r.
\]
Indeed, if $r=2s-1$ is odd, then
\[
|A|\ge n_s+n_{s+1}+\cdots+n_r\ge n_s+n_r>n_r,
\]
while if $r=2s$ is even, then
\[
|A|\ge \frac{n_s}{p_1}+n_{s+1}+\cdots+n_r\ge 1+n_r>n_r.
\]
We claim that
$
m_i\le \operatorname{ord}(g_i)-1
$
for $1\le i\le k$.
Indeed, if $m_i\ge \operatorname{ord}(g_i)$ for some $i$,
then $g_i^{\operatorname{ord}(g_i)}$ is a nontrivial zero-sum
subsequence of $A$. Since $A$ is an atom, this forces
$
A=g_i^{\operatorname{ord}(g_i)},
$
and hence
$
|A|=\operatorname{ord}(g_i)\le n_r,
$
contradicting $|A|>n_r$. Therefore
$m_i\le \operatorname{ord}(g_i)-1$ for all $i$.
Hence
$
l m_i \le l\,\operatorname{ord}(g_i)-l,
$
and
$
k_i^{(l)}\le l\,\operatorname{ord}(g_i)-1
$
for
$1\le i\le k.
$
Therefore the tuple
\[
\bigl(l\,\operatorname{ord}(g_i)-k_i^{(l)}\bigr)_{i=1}^k \in \N^k
\]
is well defined and satisfies
\[
\sum_{i=1}^k
\Bigl(\bigl(l\,\operatorname{ord}(g_i)-k_i^{(l)}\bigr)n-x_i^{(l)}\Bigr)g_i
=
\sum_{i=1}^k \bigl(l\,\operatorname{ord}(g_i)n-lm_i\bigr)g_i
=
-l\sum_{i=1}^k m_i g_i
=
0.
\]
Thus the feasible set is nonempty.
Set
$
z^{(l)}=\sigma(Y^{(l)})=\sum_{i=1}^k (t_i^{(l)}-1)h_i \in nG.
$
Suppose there exist $u_1,\dots,u_k\in \N_0$ such that
\[
\sum_{i=1}^k u_i h_i = z^{(l)}
\qquad\text{and}\qquad
\sum_{i=1}^k u_i < \sum_{i=1}^k (t_i^{(l)}-1).
\]
Then
\[
\sum_{i=1}^k \bigl((u_i+1)n-x_i^{(l)}\bigr)g_i
=
\sum_{i=1}^k u_i h_i+\sum_{i=1}^k (n-x_i^{(l)})g_i
=
z^{(l)}+\sum_{i=1}^k (n-x_i^{(l)})g_i
=
0.
\]
Hence $(u_i+1)_{i=1}^k$ is another feasible tuple for
the defining minimization of
$(t_i^{(l)})$, but
$
\sum_{i=1}^k (u_i+1)<\sum_{i=1}^k t_i^{(l)},
$
contradicting minimality.
Thus $Y^{(l)}$ is geodesic.
Applying Lemma~\ref{lem:positive-diameter} to the subgroup
$L=\langle h_1,\dots,h_k\rangle\le nG$ yields
\[
|Y^{(l)}|
=\ell^{+}_{\{h_1,\dots,h_k\}}\!\bigl(\sigma(Y^{(l)})\bigr)
\le \mathsf{D}^{*}(L)-1
\le \mathsf{D}^{*}(nG)-1.
\]
\end{proof}

\medskip
The lower bounds for Theorem \ref{formulas} were
already presented in Lemma \ref{lower-bound}.
It remains to prove the matching upper bounds.
We divide the proof into two parts based on the parity of the rank.
We first resolve the odd-rank case.

\medskip
\begin{proposition}
\label{thm:odd-unconditional}
Let
$
G\cong C_{n_1}\oplus \cdots \oplus C_{n_r}$
with
$
1<n_1\mid \cdots \mid n_r$
and
$
r=2s-1.
$
Then
$
\beta_{\mathrm{sep}}(G)=n_s+n_{s+1}+\cdots+n_r.
$
\end{proposition}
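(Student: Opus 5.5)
The plan is to prove only the upper bound $|A|\le n_s+n_{s+1}+\cdots+n_r$, since the matching lower bound is Lemma~\ref{lower-bound}. By Lemma~\ref{reduced}, it suffices to bound the length of a separating atom $A=\prod_{i=1}^k g_i^{m_i}$ over $G_0=\{g_1,\dots,g_k\}$ with $k\le r+1=2s$ and $|A|=\beta_{\mathrm{sep}}(G)$. For $r=1$ the formula reads $\beta_{\mathrm{sep}}(C_{n_1})=n_1$, which is Domokos' cyclic result (or trivially $|A|\le \mathsf{D}(C_{n_1})=n_1$). So I assume $r\ge 2$ and keep the notation of Lemma~\ref{lem:B-geodesic} with $n=n_s$.

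\textbf{Step 1 (the quotient group $nG$).} Since $nG\cong\bigoplus_{i=s+1}^r C_{n_i/n_s}$ (trivial summands dropped),
\[
\mathsf{D}^*(nG)-1=\sum_{i=s+1}^r\Bigl(\frac{n_i}{n_s}-1\Bigr),\qquad\text{so}\qquad n\bigl(\mathsf{D}^*(nG)-1\bigr)=\sum_{i=s+1}^r n_i-(s-1)n_s .
\]
This also covers the case where $nG$ is trivial, by the stated convention $\mathsf{D}^*(\{0\})=1$.

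\textbf{Step 2 (two estimates for $|A|$).} The first estimate comes from Lemma~\ref{lem:B-geodesic}:
\[
|A|=n|B|+\sum_i x_i \le n\bigl(\mathsf{D}^*(nG)-1\bigr)+\sum_i x_i .
\]
The second is a complementary bound from the case $l=1$ of Lemma~\ref{lem:Y-geodesic}. Let $(t_i)=(t_i^{(1)})$ be the minimal feasible tuple, so each $t_i\ge1$, and put $W=\prod_i g_i^{t_in-x_i}$. Then $W$ is a zero-sum sequence over $G_0$. Moreover, $A\cdot W=\prod_i g_i^{n(k_i+t_i)}$ is zero-sum with all multiplicities divisible by $n$, so Lemma~\ref{lem:n-divisible-lifting} gives $AW\in q(\mathcal{B}(G_0)_{|A|-1})$. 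If $|W|\le |A|-1$, then $A=(AW)\cdot W^{-1}$ would lie in $q(\mathcal{B}(G_0)_{|A|-1})$, contradicting that $A$ is separating. Hence
\[
|A|\le |W| = n\sum_i t_i-\sum_i x_i = n|Y^{(1)}|+nk-\sum_i x_i \le n\bigl(\mathsf{D}^*(nG)-1\bigr)+nk-\sum_i x_i .
\]

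\textbf{Step 3 (averaging).} The obstacle is that neither estimate alone suffices, because $\sum_i x_i$ can be as large as $k(n-1)$. Adding the two inequalities cancels the remainders:
\[
2|A|\le 2n\bigl(\mathsf{D}^*(nG)-1\bigr)+nk\le 2\Bigl(\sum_{i=s+1}^r n_i-(s-1)n_s\Bigr)+2s\,n_s ,
\]
using $k\le r+1=2s$. This yields $|A|\le n_s+\sum_{i=s+1}^r n_i$, as required. Here the odd parity of $r$ is exactly what makes $k\le 2s$ match the $(s-1)n_s$ deficit. The points I expect to need care are verifying that the hypotheses of Lemmas~\ref{lem:n-divisible-lifting} and~\ref{lem:Y-geodesic} hold for this $A$, and the bookkeeping identity $|Y^{(1)}|=\sum_i t_i-k$.
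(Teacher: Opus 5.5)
Your proposal is correct and matches the paper's proof step for step. The paper bounds $|A|\le n(\mathsf{D}^*(nG)-1)+\sum_i x_i$ via Lemma~\ref{lem:B-geodesic}, and takes the $l=1$ compensating sequence (your $W$, the paper's $V$). It shows $|V|\ge|A|$ via Lemma~\ref{lem:n-divisible-lifting}, bounds $|V|\le n(\mathsf{D}^*(nG)-1)+kn-\sum_i x_i$ via Lemma~\ref{lem:Y-geodesic}, and adds the two estimates using $k\le 2s$ and $n(\mathsf{D}^*(nG)-1)=\sum_{i=s+1}^r n_i-(s-1)n_s$, with $r=1$ handled by Domokos' cyclic result.
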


\begin{proof}
If $r=1$, then $G$ is cyclic and
$
\beta_{\mathrm{sep}}(G)=|G|=n_1
$
by \cite[Theorem~3.10]{Domokos17}.
Hence the assertion holds. From now on,
 assume $r\ge 3$.
Set
$
n=n_s.
$
If $nG=\{0\}$, then by convention $\mathsf{D}^{*}(nG)=1$.
In this case,  necessarily $n_{s+1}=\cdots=n_r=n$, and therefore
\[
n\bigl(\mathsf{D}^{*}(nG)-1\bigr)=0=\big(\sum_{j=s+1}^r n_j\big)-(s-1)n.
\]
If $nG\neq \{0\}$, then
$
nG\cong C_{n_{s+1}/n}\oplus \cdots \oplus C_{n_r/n},
$
and hence
\[
n\bigl(\mathsf{D}^{*}(nG)-1\bigr)
=
\big(\sum_{j=s+1}^r n_j\big)-(s-1)n.
\]
For brevity, write
$
M=n\bigl(\mathsf{D}^{*}(nG)-1\bigr)
=
\big(\sum_{j=s+1}^r n_j\big)-(s-1)n.
$
By Lemma \ref{reduced}, choose a separating atom
$
A=\prod_{i=1}^k g_i^{m_i}
$
over some
$
G_0=\{g_1,\dots,g_k\}\subseteq G
$
with
$
k\le r+1=2s
$
such that
$
|A|=\beta_{\mathrm{sep}}(G).
$
Write
$
m_i=nk_i+x_i$
with
$
0\le x_i\le n-1
$
and
define
\[
h_i=ng_i,
\qquad
B=\prod_{i=1}^k h_i^{k_i}.
\]
By Lemma~\ref{lem:B-geodesic},
$
|B|\le \mathsf{D}^{*}(nG)-1,
$
and hence
$
n|B|\le M.
$
Since
$
|A|=n|B|+\sum_{i=1}^k x_i,
$
we obtain
\begin{equation}
\label{eq:A-upper-1}
|A|\le M+\sum_{i=1}^k x_i.
\end{equation}
Let $(t_1,\dots,t_k)\in \N^k$ be chosen as in
Lemma~\ref{lem:Y-geodesic} for $l=1$; that is,
$
\sum_{i=1}^k (t_i n-x_i)g_i=0.
$
Set
\[
V=\prod_{i=1}^k g_i^{t_i n-x_i},
\qquad
Y=\prod_{i=1}^k h_i^{t_i-1}.
\]
By Lemma~\ref{lem:Y-geodesic},
$
|Y|\le \mathsf{D}^{*}(nG)-1,
$
giving
$
n|Y|\le M.
$
Next we claim that
$
|V|\ge |A|.
$
Indeed, the sequence
$
AV=\prod_{i=1}^k g_i^{(k_i+t_i)n}
$
is a zero-sum sequence all of whose multiplicities are divisible by $n$.
Since $|A|=\beta_{\mathrm{sep}}(G)$,
Lemma~\ref{lem:n-divisible-lifting}
 gives
$
AV \in q\!\bigl(\mathcal{B}(G_0)_{|A|-1}\bigr).
$
If $|V|<|A|$, then $V\in \mathcal{B}(G_0)_{|A|-1}$, and therefore
$
A=(AV)\cdot V^{-1}\in q\!\bigl(\mathcal{B}(G_0)_{|A|-1}\bigr),
$
contradicting that $A$ is separating.
Thus $|V|\ge |A|$.
On the other hand,
\[
|V|
=
\sum_{i=1}^k (t_i n-x_i)
=
n\sum_{i=1}^k (t_i-1)+kn-\sum_{i=1}^k x_i
=
n|Y|+kn-\sum_{i=1}^k x_i
\le
M+kn-\sum_{i=1}^k x_i.
\]
Since $k\le 2s$, we get
\begin{equation}
\label{eq:A-upper-2}
|A|
\le
|V|
\le
M+2sn-\sum_{i=1}^k x_i.
\end{equation}

Adding \eqref{eq:A-upper-1} and \eqref{eq:A-upper-2}, we obtain
\[
2|A|
\le
2M+2sn
=
2\sum_{j=s+1}^r n_j - 2(s-1)n + 2sn
=
2\sum_{j=s+1}^r n_j + 2n.
\]
Hence
$
|A|\le n+\sum_{j=s+1}^r n_j
=
\sum_{j=s}^r n_j.
$
Since $|A|=\beta_{\mathrm{sep}}(G)$, this proves
$
\beta_{\mathrm{sep}}(G)\le \sum_{j=s}^r n_j.
$
The opposite inequality
$
\beta_{\mathrm{sep}}(G)\ge \sum_{j=s}^r n_j
$
is exactly Lemma \ref{lower-bound} in the odd-rank case.
Therefore
$
\beta_{\mathrm{sep}}(G)=n_s+n_{s+1}+\cdots+n_r.
$
\end{proof}

\medskip
We now turn to the more intricate even-rank case,
where $r=2s$. Unlike the odd-rank case,
the exact formula for even rank depends on
the minimal prime divisor $p_1$ of $n_1$.
As a crucial stepping stone towards the
final exact bound, we first establish a
slightly weaker intermediate upper bound
involving $n_s/2$. This ``half-bound''
imposes a strict structural constraint
that will be instrumental in our final deduction.

\medskip
\begin{proposition}
\label{thm:even-half-bound}
Let
$
G\cong C_{n_1}\oplus \cdots \oplus C_{n_r}
$
with
$
1<n_1\mid \cdots \mid n_r
$
and
$
r=2s.
$
Then
$
\beta_{\mathrm{sep}}(G)\le \frac{n_s}{2}+n_{s+1}+\cdots+n_r.
$
\end{proposition}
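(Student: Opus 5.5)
The plan is to repeat the two-sided counting argument from the proof of Proposition~\ref{thm:odd-unconditional} verbatim. The only changes are the value of $M$ and the bound on $k$. The final averaging step then produces exactly the extra $n/2$.

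\textbf{Setup.} Set $n=n_s$ and fix, by Lemma~\ref{reduced}, a separating atom $A=\prod_{i=1}^k g_i^{m_i}$ over $G_0=\{g_1,\dots,g_k\}$ with $|A|=\beta_{\mathrm{sep}}(G)$. Since $r=2s$, we now have $k\le r+1=2s+1$.

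\textbf{Computing $M$.} If $nG\neq\{0\}$, then $nG\cong C_{n_{s+1}/n}\oplus\cdots\oplus C_{n_r}/n$ has $s$ cyclic summands (indices $s+1,\dots,r=2s$). Hence
\[
M:=n\bigl(\mathsf{D}^*(nG)-1\bigr)=\sum_{j=s+1}^r n_j - sn.
\]
If $nG=\{0\}$, then $n_{s+1}=\cdots=n_r=n$. The convention $\mathsf{D}^*(\{0\})=1$ gives $M=0$, which agrees with the same formula.

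\textbf{First inequality.} Write $m_i=nk_i+x_i$ with $0\le x_i\le n-1$. Lemma~\ref{lem:B-geodesic} gives $n|B|\le M$, and hence
\[
|A|=n|B|+\sum_i x_i\le M+\sum_i x_i .
\]

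\textbf{Second inequality.} Take $(t_i)$ from Lemma~\ref{lem:Y-geodesic} with $l=1$, and put $V=\prod_i g_i^{t_in-x_i}$.
\begin{itemize}
\item Exactly as in the odd case, $AV$ is a zero-sum sequence whose multiplicities are all divisible by $n$. Lemma~\ref{lem:n-divisible-lifting} (which only needs $r\ge2$) puts $AV$ in $q(\mathcal{B}(G_0)_{|A|-1})$.
\item So if $|V|<|A|$, then $A=(AV)V^{-1}$ would lie in $q(\mathcal{B}(G_0)_{|A|-1})$, contradicting that $A$ is separating. Hence $|V|\ge|A|$.
\item Lemma~\ref{lem:Y-geodesic} gives $n|Y|\le M$, so
\[
|A|\le |V| = n|Y|+kn-\sum_i x_i\le M+(2s+1)n-\sum_i x_i .
\]
\end{itemize}

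\textbf{Conclusion.} Adding the two inequalities cancels $\sum_i x_i$ and gives
\[
2|A|\le 2M+(2s+1)n = 2\sum_{j=s+1}^r n_j - 2sn + (2s+1)n = 2\sum_{j=s+1}^r n_j + n .
\]
Therefore $\beta_{\mathrm{sep}}(G)=|A|\le \tfrac{n_s}{2}+n_{s+1}+\cdots+n_r$, as claimed.

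\textbf{Main point to check.} No step is genuinely new; the work is in bookkeeping. The cyclic decomposition of $nG$ must be counted correctly: with $r=2s$ there are $s$ summands above index $s$, rather than the $s-1$ of the odd case. This change of $-sn$ versus $-(s-1)n$ in $M$, together with the extra element allowed in $G_0$ ($k\le 2s+1$), is exactly what converts the odd-case bound into the half-term $n_s/2$.
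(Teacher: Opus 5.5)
Your proposal is correct and is essentially the paper's own proof. It uses the same two bounds, $|A|\le M+\sum_i x_i$ from Lemma~\ref{lem:B-geodesic} and $|A|\le |V|\le M+kn-\sum_i x_i$ from Lemmas~\ref{lem:n-divisible-lifting} and~\ref{lem:Y-geodesic}, with $M=\sum_{j=s+1}^{r}n_j-sn$ and $k\le 2s+1$, and adds them to get $2|A|\le 2\sum_{j=s+1}^{r}n_j+n$. Two cosmetic fixes: the last summand should read $C_{n_r/n}$, and some of the $s$ summands of $nG$ may be trivial, which is harmless since each such summand contributes $n(1-1)=0$ to $M$.
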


\begin{proof}
Let
$
n=n_s$
and
$
T=\sum_{j=s+1}^r n_j.
$
If $nG=\{0\}$, then $n_{s+1}=\cdots=n_r=n$,
so by convention $\mathsf{D}^*(nG)=1$ and
$
n\bigl(\mathsf{D}^*(nG)-1\bigr)=0=T-sn.
$
If $nG\neq \{0\}$, then, after omitting trivial factors,
\[
nG\cong C_{n_{s+1}/n}\oplus\cdots\oplus C_{n_r/n},
\]
and therefore
\[
n\bigl(\mathsf{D}^*(nG)-1\bigr)
=
n\sum_{j=s+1}^r\Bigl(\frac{n_j}{n}-1\Bigr)
=
T-sn.
\]
Set
$
M=n\bigl(\mathsf{D}^*(nG)-1\bigr)=T-sn.
$
By Lemma \ref{reduced}, choose a subset
\[
G_0=\{g_1,\dots,g_k\}\subseteq G,
\qquad
k\le r+1=2s+1,
\]
and a separating atom
$
A=\prod_{i=1}^k g_i^{m_i}
$
over $G_0$ such that
$
|A|=\beta_{\mathrm{sep}}(G).
$
Write
$
m_i=nk_i+x_i
$
with
$
0\le x_i\le n-1,
$
and set
$
h_i=ng_i
$
and
$
B=\prod_{i=1}^k h_i^{k_i}.
$
By Lemma~\ref{lem:B-geodesic},
$
|B|\le \mathsf{D}^*(nG)-1,
$
hence
\begin{equation}\label{E1}
|A|=n|B|+\sum_{i=1}^k x_i\le M+\sum_{i=1}^k x_i.
\end{equation}
Now let $(t_1,\dots,t_k)\in \N^k$ be chosen as in
Lemma~\ref{lem:Y-geodesic} for $l=1$; that is,
$
\sum_{i=1}^k (t_i n-x_i)g_i=0
$
and $\sum_{i=1}^k t_i$ is minimal among all
tuples in $\N^k$ with this property.
Define
\[
V=\prod_{i=1}^k g_i^{t_i n-x_i},
\qquad
Y=\prod_{i=1}^k h_i^{t_i-1}.
\]
As in the proof of
Proposition~\ref{thm:odd-unconditional}, the sequence
$
AV=\prod_{i=1}^k g_i^{(k_i+t_i)n}
$
is a zero-sum sequence over $G_0$,
and all its multiplicities are divisible by $n$.
Hence Lemma~\ref{lem:n-divisible-lifting} gives
$
AV\in q\!\bigl(\mathcal{B}(G_0)_{|A|-1}\bigr).
$
If we had $|V|\le |A|-1$, then
$V\in \mathcal{B}(G_0)_{|A|-1}\subseteq q(\mathcal{B}(G_0)_{|A|-1})$
and
$
A=(AV)\cdot V^{-1}\in q\!\bigl(\mathcal{B}(G_0)_{|A|-1}\bigr),
$
contradicting that $A$ is a separating atom.
Hence
$
|V|\ge |A|.
$
By Lemma~\ref{lem:Y-geodesic},
$
|Y|\le \mathsf{D}^*(nG)-1,
$
and therefore
\[
|V|
=
n|Y|+kn-\sum_{i=1}^k x_i
\le
M+kn-\sum_{i=1}^k x_i.
\]
Since $|V|\ge |A|$, we get
\begin{equation}\label{E2}
|A|\le M+kn-\sum_{i=1}^k x_i.
\end{equation}
Adding \eqref{E1} and \eqref{E2} yields
$
2|A|
\le
2M+kn
\le
2(T-sn)+(2s+1)n
=
2T+n.
$
Hence
\[
|A|\le T+\frac{n}{2}
=
\frac{n_s}{2}+n_{s+1}+\cdots+n_r.
\]
Since $|A|=\beta_{\mathrm{sep}}(G)$, the proof is complete.
\end{proof}

\medskip
Before proving the exact formula involving
the minimal prime divisor $p_1$, we establish a key structural inequality connecting the separating Noether number of the subgroup $n_1G$ to the invariants of $G$. This rank-shift bound will serve as the final ingredient for the even-rank case.

\medskip
\begin{lemma}
\label{lem:n1-rank-shift}
Let
$
G\cong C_{n_1}\oplus \cdots \oplus C_{n_r}
$
with
$
1<n_1\mid \cdots \mid n_r
$
and
$
r=2s.
$
Let
$
a=\max\{j\in [1,r]\,|\,n_j=n_1\}.
$
Assume $n_1G$ is nontrivial, and write
\[
n_1G\cong C_{m_1}\oplus \cdots \oplus C_{m_t},
\qquad
m_j=\frac{n_{a+j}}{n_1},
\qquad
t=r-a.
\]
Then
$
n_1\,\beta_{\mathrm{sep}}(n_1G)\le \sum_{j=s+1}^r n_j.
$
\end{lemma}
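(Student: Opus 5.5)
The plan is to bound $\beta_{\mathrm{sep}}(n_1G)$ using the two unconditional results already proved: Proposition~\ref{thm:odd-unconditional} and Proposition~\ref{thm:even-half-bound}. Both hold for \emph{every} finite abelian group of the appropriate rank parity, so they apply to $H=n_1G$. Then I multiply the resulting bound by $n_1$ and compare term by term with $\sum_{j=s+1}^r n_j$.

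First I would record the invariants of $H$. Since $n_1G$ is nontrivial, $a<r$, so $1\le t=r-a\le r-1=2s-1$. Moreover $H\cong C_{m_1}\oplus\cdots\oplus C_{m_t}$ with $1<m_1\mid\cdots\mid m_t$ and $m_j=n_{a+j}/n_1$. This is a legitimate invariant decomposition: $m_1>1$ by the choice of $a$, and divisibility is inherited from $n_{a+1}\mid\cdots\mid n_r$. The rank of $H$ is $t$, and I split according to its parity.

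\emph{Case $t=2u-1$ odd.} Proposition~\ref{thm:odd-unconditional} applies to $H$; it also covers $t=1$, the cyclic case. It gives $n_1\beta_{\mathrm{sep}}(H)=n_1(m_u+\cdots+m_t)=n_{a+u}+\cdots+n_r$, which is the sum of the $u$ largest $n_j$. Since $u=(t+1)/2\le r/2=s$, these $u$ terms are among the $s$ terms $n_{s+1},\dots,n_r$. All $n_j$ are positive, so the bound $\le\sum_{j=s+1}^r n_j$ follows.

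\emph{Case $t=2u$ even.} Here $t\le 2s-1$ forces $t\le 2s-2$, hence $u\le s-1$. Proposition~\ref{thm:even-half-bound} applied to $H$ gives
\[
n_1\beta_{\mathrm{sep}}(H)\le \frac{n_{a+u}}{2}+n_{a+u+1}+\cdots+n_r\le n_{a+u}+n_{a+u+1}+\cdots+n_r .
\]
The right-hand side is the sum of the $u+1\le s$ largest $n_j$, since $a+u=r-u$. So it is again at most $n_{s+1}+\cdots+n_r$.

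The only potentially delicate point is checking that the counting of indices is right, namely that the top $u$ (respectively $u+1$) terms fit inside the top $s$ terms. This rests on the observation that $t$ even and $t\le 2s-1$ force $t\le 2s-2$. Everything else is a direct substitution of the already established upper bounds. The lemma also needs no induction, because both propositions are stated for arbitrary finite abelian groups of the given rank parity.
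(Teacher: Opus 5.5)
Your proposal is correct and follows essentially the same route as the paper. You split on the parity of $t$, apply Proposition~\ref{thm:odd-unconditional} or Proposition~\ref{thm:even-half-bound} to $n_1G$, multiply by $n_1$, and check that the surviving indices lie in $[s+1,r]$. Your term count via $t\le 2s-1$ (resp.\ $t\le 2s-2$) is equivalent to the paper's computation $a+u=s+\frac{a+1}{2}$ (resp.\ $s+\frac{a}{2}$) $\ge s+1$, which uses the parity of $a$.
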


\begin{proof}
Set
$
T=\sum_{j=s+1}^r n_j.
$
We consider two cases separately.

\smallskip
\noindent
\textbf{Case 1}: $t$ is odd.
Write $t=2u-1$.
By Proposition~\ref{thm:odd-unconditional},
$
\beta_{\mathrm{sep}}(n_1G)=m_u+\cdots+m_t.
$
Hence
$
n_1\beta_{\mathrm{sep}}(n_1G)=n_{a+u}+\cdots+n_r.
$
Since $t=r-a=2s-a$ is odd, the integer $a$ is odd, and thus
\[
u=\frac{t+1}{2}=s-\frac{a-1}{2},
\qquad
a+u=s+\frac{a+1}{2}\ge s+1.
\]
Therefore
\[
n_1\beta_{\mathrm{sep}}(n_1G)=n_{a+u}+\cdots+n_r\le n_{s+1}+\cdots+n_r=T.
\]

\smallskip
\noindent
\textbf{Case 2}: $t$ is even.
Write $t=2u$.
By Proposition~\ref{thm:even-half-bound},
$
\beta_{\mathrm{sep}}(n_1G)\le \frac{m_u}{2}+m_{u+1}+\cdots+m_t.
$
Multiplying by $n_1$ gives
$
n_1\beta_{\mathrm{sep}}(n_1G)
\le
\frac{n_{a+u}}{2}+n_{a+u+1}+\cdots+n_r.
$
Since $t=2s-a$ is even, the integer $a$ is even, hence $a\ge 2$ and
\[
u=\frac{t}{2}=s-\frac{a}{2},
\qquad
a+u=s+\frac{a}{2}\ge s+1.
\]
Thus
\[
\frac{n_{a+u}}{2}+n_{a+u+1}+\cdots+n_r
\le
n_{s+1}+\cdots+n_r=T.
\]
So again
$
n_1\beta_{\mathrm{sep}}(n_1G)\le T.
$
This completes the proof.
\end{proof}

\medskip
To construct the necessary auxiliary
sequences for our subsequent arguments,
we first recall and fix the core notation
for the even-rank case.
Let $G\cong C_{n_1}\oplus \cdots \oplus C_{n_r}$
with $1<n_1\mid \cdots \mid n_r$ and $r=2s$.
Write $n=n_s$ and $T=\sum_{j=s+1}^r n_j$.
Let $G_0=\{g_1,\dots,g_k\}\subseteq G$ with $k\le 2s+1$,
and let $A=\prod_{i=1}^k g_i^{m_i} \in \mathcal{F}(G_0)$
be a separating atom over $G_0$ such that $|A|=\beta_{\mathrm{sep}}(G)$.
For each positive integer $l \in \mathbb{N}$,
performing division with remainder by $n$ yields
\[
lm_i = n k_i^{(l)} + x_i^{(l)},
\qquad
k_i^{(l)}\in \mathbb{N}_0,
\quad
x_i^{(l)}\in [0,n-1]
\qquad (1 \le i \le k).
\]
We define $h_i=ng_i\in nG$ and the quotient sequence
$B^{(l)}=\prod_{i=1}^k h_i^{k_i^{(l)}}\in \mathcal{F}(nG)$.
We also denote the sum of the remainders by
$f(l)=\sum_{i=1}^k x_i^{(l)}$.

\medskip
\begin{definition}
\label{def:geodesic-surrogate}
For each $l\in\mathbb{N}$, choose coefficients $u_1^{(l)},\dots,u_k^{(l)}\in \mathbb{N}_0$ that minimize the sum $\sum_{i=1}^k u_i^{(l)}$ subject to the constraint
\[
\sum_{i=1}^k u_i^{(l)}h_i = \sum_{i=1}^k k_i^{(l)}h_i.
\]
This minimal sum precisely equals the positive length $\ell^{+}_{\{h_1,\dots,h_k\}}\!\bigl(\sigma(B^{(l)})\bigr)$.
We define the  geodesic surrogate
$U^{(l)} \in \mathcal{F}(nG)$ and
its lifted sequence $\widehat{W}^{(l)} \in \mathcal{F}(G_0)$ as
\[
U^{(l)}=\prod_{i=1}^k h_i^{u_i^{(l)}},
\qquad
\widehat{W}^{(l)}=\prod_{i=1}^k g_i^{n u_i^{(l)}+x_i^{(l)}}.
\]
\end{definition}

The construction of the geodesic surrogate
immediately yields two fundamental properties of the lifted sequence $\widehat{W}^{(l)}$.
\begin{lemma}
\label{lem:surrogate-basic}
For every $l\in \N$ the following hold.
\begin{enumerate}[label=\rm(\alph*),leftmargin=*]
\item $\widehat{W}^{(l)}$ is a zero-sum sequence over $G_0$.
\item
\[
|\widehat{W}^{(l)}|
\le
n\bigl(\mathsf{D}^{*}(nG)-1\bigr)+f(l)
=
T-sn+f(l).
\]
\end{enumerate}
\end{lemma}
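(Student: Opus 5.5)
Both parts follow directly from the defining constraint of the geodesic surrogate and from Lemma~\ref{lem:positive-diameter}; the argument mirrors the computation already used in Lemma~\ref{lem:B-geodesic}.

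For (a), I would compute $\sigma(\widehat{W}^{(l)})$ directly:
\[
\sum_{i=1}^k \bigl(nu_i^{(l)}+x_i^{(l)}\bigr)g_i
=\sum_{i=1}^k u_i^{(l)}h_i+\sum_{i=1}^k x_i^{(l)}g_i
=\sum_{i=1}^k k_i^{(l)}h_i+\sum_{i=1}^k x_i^{(l)}g_i
=l\sum_{i=1}^k m_ig_i.
\]
The second equality is the constraint in Definition~\ref{def:geodesic-surrogate}. The last expression is $l\,\sigma(A)=0$, since $A$ is an atom. All exponents $nu_i^{(l)}+x_i^{(l)}$ lie in $\N_0$, so $\widehat{W}^{(l)}\in\mathcal{B}(G_0)$.

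For (b), I would first note that $|\widehat{W}^{(l)}|=n\sum_i u_i^{(l)}+f(l)=n|U^{(l)}|+f(l)$. By construction, $|U^{(l)}|=\ell^{+}_{\{h_1,\dots,h_k\}}(\sigma(B^{(l)}))$. Here $\sigma(B^{(l)})$ lies in $L=\langle h_1,\dots,h_k\rangle\le nG$, so the positive length is finite and the minimum in Definition~\ref{def:geodesic-surrogate} exists. Lemma~\ref{lem:positive-diameter}, applied to the generating set $\{h_1,\dots,h_k\}$ of $L$, then gives $|U^{(l)}|\le \mathsf{D}^*(L)-1\le \mathsf{D}^*(nG)-1$. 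Multiplying by $n$ and adding $f(l)$ gives the inequality in (b).

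The remaining step is the identity $n(\mathsf{D}^*(nG)-1)=T-sn$, which is exactly the computation at the start of the proof of Proposition~\ref{thm:even-half-bound}. If $nG=\{0\}$, then $n_{s+1}=\cdots=n_r=n$, so both sides equal $0$ under the convention $\mathsf{D}^*(\{0\})=1$. Otherwise $nG\cong C_{n_{s+1}/n}\oplus\cdots\oplus C_{n_r/n}$, after omitting trivial factors, since each factor $C_{n_j}$ with $j\le s$ is killed by $n$. Hence $n\sum_{j=s+1}^r(n_j/n-1)=T-sn$. A trivial factor contributes $n_j/n-1=0$, so omitting those factors does not change the sum.

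There is no real obstacle. The only points needing care are the existence and finiteness of the minimum defining $U^{(l)}$, and the degenerate case $nG=\{0\}$, where $U^{(l)}$ is empty and the bound reads $|\widehat{W}^{(l)}|=f(l)$.
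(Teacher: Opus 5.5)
Your proposal is correct and takes the same approach as the paper. For (a) you compute the sum using the surrogate constraint. For (b) you apply Lemma~\ref{lem:positive-diameter} to $L=\langle h_1,\dots,h_k\rangle\le nG$ and then use the identity $n(\mathsf{D}^*(nG)-1)=T-sn$. Your remarks on why the minimum exists and on the degenerate case $nG=\{0\}$ are accurate; they spell out points the paper leaves implicit.
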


\begin{proof}
Since
$
\sum_{i=1}^k u_i^{(l)}h_i=\sum_{i=1}^k k_i^{(l)}h_i,
$
we get
\[
\sigma\!\bigl(\widehat{W}^{(l)}\bigr)
=
\sum_{i=1}^k \bigl(nu_i^{(l)}+x_i^{(l)}\bigr)g_i
=
\sum_{i=1}^k k_i^{(l)}h_i + \sum_{i=1}^k x_i^{(l)}g_i
=
\sum_{i=1}^k l m_i g_i
=
l\,\sigma(A)
=
0,
\]
giving $\widehat{W}^{(l)}\in \mathcal{B}(G_0)$.
For the length bound, let
$
L=\langle h_1,\dots,h_k\rangle \le nG.
$
By Lemma \ref{lem:positive-diameter},
\[
|U^{(l)}|
=
\sum_{i=1}^k u_i^{(l)}
=
\ell^{+}_{\{h_1,\dots,h_k\}}\!\Bigl(\sum_{i=1}^k k_i^{(l)}h_i\Bigr)
\le
\mathsf{D}^{*}(L)-1
\le
\mathsf{D}^{*}(nG)-1.
\]
Hence
$
|\widehat{W}^{(l)}|
=
n|U^{(l)}|+f(l)
\le
n\bigl(\mathsf{D}^{*}(nG)-1\bigr)+f(l).
$
Since
$
n\bigl(\mathsf{D}^{*}(nG)-1\bigr)=\sum_{j=s+1}^r n_j-sn=T-sn,
$
the proof is complete.
\end{proof}

\medskip
Having securely bounded the length of the lifted sequence
 $\widehat{W}^{(l)}$ in terms of $f(l)$,
we now establish a complementary upper
bound for the sequence $V^{(l)}$,
which compensates for the remainders $x_i^{(l)}$
from the opposite direction.

\medskip
\begin{lemma}
\label{lem:V-upper}
For every $l\in \N$,
let $(t_1^{(l)},\dots,t_k^{(l)})\in \N^k$ be chosen with
$
\sum_{i=1}^k (t_i^{(l)}n - x_i^{(l)})g_i=0
$
and with $\sum_{i=1}^kt_i^{(l)}$ minimal, and define
\[
V^{(l)}=\prod_{i=1}^k g_i^{\,t_i^{(l)}n-x_i^{(l)}},
\qquad
Y^{(l)}=\prod_{i=1}^k h_i^{\,t_i^{(l)}-1}.
\]
Then
$
|V^{(l)}|
\le
T+(k-s)n-f(l).
$
In particular,
$
|\widehat{W}^{(l)}|+|V^{(l)}|
\le
2T+(k-2s)n
\le
2T+n.
$
\end{lemma}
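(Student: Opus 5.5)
The bound follows directly from Lemma~\ref{lem:Y-geodesic}; afterwards we add the estimate of Lemma~\ref{lem:surrogate-basic}(b).

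First, compute $|V^{(l)}|$ in terms of $Y^{(l)}$. By definition,
\[
|V^{(l)}|=\sum_{i=1}^k \bigl(t_i^{(l)}n-x_i^{(l)}\bigr)
=n\sum_{i=1}^k\bigl(t_i^{(l)}-1\bigr)+kn-\sum_{i=1}^k x_i^{(l)}
=n|Y^{(l)}|+kn-f(l).
\]
Lemma~\ref{lem:Y-geodesic} is stated for every $l\in\N$, under exactly the same minimality choice of $(t_i^{(l)})$. It gives that $Y^{(l)}$ is geodesic in $nG$ with respect to $\{h_1,\dots,h_k\}$, so $|Y^{(l)}|\le \mathsf{D}^*(nG)-1$. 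That lemma also checks that the feasible set is nonempty, using $|A|>n_r$ from Lemma~\ref{lower-bound}.

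Next, we need the identity $n(\mathsf{D}^*(nG)-1)=T-sn$, already computed in the proof of Proposition~\ref{thm:even-half-bound}. It covers both the case $nG=\{0\}$ (by the convention $\mathsf{D}^*(\{0\})=1$) and the case $nG\cong C_{n_{s+1}/n}\oplus\cdots\oplus C_{n_r/n}$ after dropping trivial factors. Substituting gives
\[
|V^{(l)}|\le T-sn+kn-f(l)=T+(k-s)n-f(l).
\]

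For the ``in particular'' part, add Lemma~\ref{lem:surrogate-basic}(b), namely $|\widehat{W}^{(l)}|\le T-sn+f(l)$. The terms $\pm f(l)$ cancel, leaving $|\widehat{W}^{(l)}|+|V^{(l)}|\le 2T+(k-2s)n$. Finally, $k\le r+1=2s+1$ gives $(k-2s)n\le n$.

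No step is a real obstacle. The only point to check is that Lemma~\ref{lem:Y-geodesic} is used for general $l$ and not just $l=1$. It is: its proof uses only $\sum_i lm_ig_i=0$ and $m_i\le \operatorname{ord}(g_i)-1$, and both hold for every $l$.
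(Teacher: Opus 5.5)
Your proposal is correct and follows the paper's proof essentially verbatim. You write $|V^{(l)}|=n|Y^{(l)}|+kn-f(l)$, bound $|Y^{(l)}|\le \mathsf{D}^*(nG)-1$ via Lemma~\ref{lem:Y-geodesic} together with $n(\mathsf{D}^*(nG)-1)=T-sn$, and then add Lemma~\ref{lem:surrogate-basic}(b) and use $k\le 2s+1$, exactly as the paper does.
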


\begin{proof}
By Lemma~\ref{lem:Y-geodesic},
$
|Y^{(l)}|
\le
\mathsf{D}^{*}(nG)-1
=
\frac{T}{n}-s.
$
Therefore
$
n\sum_{i=1}^k (t_i^{(l)}-1)
=
|Y^{(l)}|\,n
\le
T-sn,
$
hence
\[
|V^{(l)}|
=
\sum_{i=1}^k (t_i^{(l)}n-x_i^{(l)})
=
n\sum_{i=1}^k (t_i^{(l)}-1)+kn-f(l)
\le
T-sn+kn-f(l).
\]
This gives the first claim.
Adding this to Lemma~\ref{lem:surrogate-basic}(b) yields
\[
|\widehat{W}^{(l)}|+|V^{(l)}|
\le
(T-sn+f(l)) + (T+(k-s)n-f(l))
=
2T+(k-2s)n
\le
2T+n,
\]
since $k\le 2s+1$.
\end{proof}

\medskip
We next show how a short lifted sequence
forces a power of $A$ into
$q(\mathcal{B}(G_0)_{|A|-1})$.

\medskip
\begin{lemma}
\label{lem:geodesic-lift}
Fix $l\in \N$.
If
$
\min\{\,|\widehat{W}^{(l)}|,\ |V^{(l)}|\,\}\le |A|-1,
$
then
$
A^l \in q\!\bigl(\mathcal{B}(G_0)_{|A|-1}\bigr).
$
\end{lemma}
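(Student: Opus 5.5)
The plan is to write $A^l$ as a short zero-sum sequence times quotients of $n$-divisible zero-sum sequences, and then invoke Lemma~\ref{lem:n-divisible-lifting}. This mirrors the argument in Lemma~\ref{lem:B-geodesic} and in the proof of Proposition~\ref{thm:odd-unconditional}. We treat the two alternatives separately.

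\emph{Case $|V^{(l)}|\le |A|-1$.} The sequence $A^l V^{(l)}$ has multiplicities
\[
lm_i+t_i^{(l)}n-x_i^{(l)}=n\bigl(k_i^{(l)}+t_i^{(l)}\bigr),
\]
all divisible by $n$. It is zero-sum because $A^l$ and $V^{(l)}$ are. By Lemma~\ref{lem:n-divisible-lifting}, $A^lV^{(l)}\in q(\mathcal{B}(G_0)_{|A|-1})$. Since $V^{(l)}\in\mathcal{B}(G_0)_{|A|-1}$, we get
\[
A^l=(A^lV^{(l)})\cdot (V^{(l)})^{-1}\in q(\mathcal{B}(G_0)_{|A|-1}).
\]

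\emph{Case $|\widehat W^{(l)}|\le |A|-1$.} Lemma~\ref{lem:surrogate-basic}(a) shows that $\widehat W^{(l)}\in\mathcal{B}(G_0)_{|A|-1}$. Set
\[
p_i=\bigl(k_i^{(l)}-u_i^{(l)}\bigr)^+,\qquad q_i=\bigl(u_i^{(l)}-k_i^{(l)}\bigr)^+,
\]
and let $P=\prod g_i^{np_i}$ and $Q=\prod g_i^{nq_i}$. Then $k_i^{(l)}+q_i=u_i^{(l)}+p_i$, so the identity of sequences
\[
A^l\cdot Q=\widehat W^{(l)}\cdot P
\]
holds: the exponents are $nk_i^{(l)}+x_i^{(l)}+nq_i$ and $nu_i^{(l)}+x_i^{(l)}+np_i$ respectively. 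The defining constraint on the $u_i^{(l)}$ gives $\sum p_ih_i=\sum q_ih_i=:z$. Because $L=\langle h_1,\dots,h_k\rangle$ is finite, $-z$ is a nonnegative combination $\sum c_ih_i$. Put $C=\prod g_i^{nc_i}$. Then $PC$ and $QC$ are zero-sum with all multiplicities divisible by $n$, so Lemma~\ref{lem:n-divisible-lifting} places both in $q(\mathcal{B}(G_0)_{|A|-1})$. Hence
\[
A^l=\widehat W^{(l)}\cdot (PC)\cdot (QC)^{-1}\in q(\mathcal{B}(G_0)_{|A|-1}).
\]

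The only delicate point is that $P$ and $Q$ are not themselves zero-sum, which is why the correction $C$ is needed. Finding $C$ requires writing $-z$ with nonnegative coefficients, and this is possible because every element of a finite group is a positive combination of its generators. Apart from that, the argument is routine bookkeeping of exponents, just as in Lemma~\ref{lem:B-geodesic}.
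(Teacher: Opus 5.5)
Your proof is correct and follows the paper's argument. The case $|\widehat{W}^{(l)}|\le |A|-1$ is handled exactly as in the paper, via $P$, $Q$, the correction $C$, and Lemma~\ref{lem:n-divisible-lifting}. In the case $|V^{(l)}|\le |A|-1$ you apply that lemma directly to the $n$-divisible zero-sum sequence $A^lV^{(l)}$ (as the paper itself does in Lemma~\ref{lem:V-lower-coprime}), rather than routing through $\widehat{W}^{(l)}V^{(l)}$ and the identity \eqref{GL2}; this is a slightly shorter path to the same conclusion.
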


\begin{proof}
Set
\[
p_i^{(l)} = \bigl(k_i^{(l)}-u_i^{(l)}\bigr)^+=\max\{k_i^{(l)}-u_i^{(l)},0\},
~~
q_i^{(l)} = \bigl(u_i^{(l)}-k_i^{(l)}\bigr)^+=\max\{u_i^{(l)}-k_i^{(l)},0\}.
\]
Then
$
k_i^{(l)} + q_i^{(l)} = u_i^{(l)} + p_i^{(l)}
$
for
$1\le i\le k$
and since $\sigma(U^{(l)})=\sigma(B^{(l)})$,
\[
\sum_{i=1}^k p_i^{(l)} h_i
=
\sum_{i=1}^k q_i^{(l)} h_i
=: s^{(l)} \in \langle h_1,\dots,h_k\rangle.
\]
Since
$
-\,s^{(l)}\in \langle h_1,\dots,h_k\rangle,
$
there exist integers
$
c_1^{(l)},\dots,c_k^{(l)}\in \N_0
$
such that
$
\sum_{i=1}^k c_i^{(l)} h_i = -\,s^{(l)}.
$
Define the $n$-divisible sequences
\[
P^{(l)}=\prod_{i=1}^k g_i^{\,n p_i^{(l)}},
\qquad
Q^{(l)}=\prod_{i=1}^k g_i^{\,n q_i^{(l)}},
\qquad
C^{(l)}=\prod_{i=1}^k g_i^{\,n c_i^{(l)}}.
\]
Then
$
P^{(l)}C^{(l)},\ Q^{(l)}C^{(l)} \in \mathcal{B}(G_0),
$
and all their multiplicities are divisible by $n$.
We then have
\begin{equation}
\label{GL1}
P^{(l)}C^{(l)},\ Q^{(l)}C^{(l)}
\in
q\!\bigl(\mathcal{B}(G_0)_{|A|-1}\bigr).
\end{equation}
Now compare exponents:
\[
A^l Q^{(l)} C^{(l)}
=
\prod_{i=1}^k g_i^{\,n(k_i^{(l)}+q_i^{(l)}+c_i^{(l)}) + x_i^{(l)}}
=
\prod_{i=1}^k g_i^{\,n(u_i^{(l)}+p_i^{(l)}+c_i^{(l)}) + x_i^{(l)}}
=
\widehat{W}^{(l)} P^{(l)} C^{(l)}.
\]
Therefore, in $q(\mathcal{B}(G_0))$,
\begin{equation}
\label{GL2}
A^l
=
\widehat{W}^{(l)} \bigl(P^{(l)}C^{(l)}\bigr)\bigl(Q^{(l)}C^{(l)}\bigr)^{-1}.
\end{equation}
Next note that
$
\widehat{W}^{(l)}V^{(l)}
=
\prod_{i=1}^k g_i^{\,n(u_i^{(l)}+t_i^{(l)})}.
$
Thus every multiplicity in $\widehat{W}^{(l)}V^{(l)}$ is divisible by $n$.
Moreover,
$
\sum_{i=1}^k t_i^{(l)}n g_i=\sum_{i=1}^k x_i^{(l)}g_i
$
by the defining relation of $V^{(l)}$, while
\[
\sum_{i=1}^k k_i^{(l)}h_i
=
\sum_{i=1}^k n k_i^{(l)}g_i
=
-\sum_{i=1}^k x_i^{(l)}g_i
\]
because
\[
\sum_{i=1}^k \bigl(nk_i^{(l)}+x_i^{(l)}\bigr)g_i
=
\sum_{i=1}^k l m_i g_i
=
0.
\]
Hence
\[
\sigma\!\bigl(\widehat{W}^{(l)}V^{(l)}\bigr)
=
\sum_{i=1}^k n(u_i^{(l)}+t_i^{(l)})g_i
=
\sum_{i=1}^k k_i^{(l)}h_i+\sum_{i=1}^k t_i^{(l)}n g_i
=
0.
\]
Note that $\widehat{W}^{(l)}V^{(l)}$ is an
$n$-divisible zero-sum sequence, so again
\begin{equation}
\label{GL3}
\widehat{W}^{(l)}V^{(l)} \in q\!\bigl(\mathcal{B}(G_0)_{|A|-1}\bigr).
\end{equation}
If $|\widehat{W}^{(l)}|\le |A|-1$, then
$
\widehat{W}^{(l)}\in \mathcal{B}(G_0)_{|A|-1}.
$
Together with \eqref{GL1} and \eqref{GL2}, this implies
$
A^l \in q\!\bigl(\mathcal{B}(G_0)_{|A|-1}\bigr).
$

If $|V^{(l)}|\le |A|-1$, then
$
V^{(l)}\in \mathcal{B}(G_0)_{|A|-1}.
$
Hence, from \eqref{GL2} and \eqref{GL3}, we get
\[
A^l
=
\bigl(\widehat{W}^{(l)}V^{(l)}\bigr)\,
\bigl(V^{(l)}\bigr)^{-1}\,
\bigl(P^{(l)}C^{(l)}\bigr)\,
\bigl(Q^{(l)}C^{(l)}\bigr)^{-1}
\in
q\!\bigl(\mathcal{B}(G_0)_{|A|-1}\bigr).
\]
This proves the lemma.
\end{proof}

\medskip
The following lemma distills and reformulates
Claim  A4 from \cite{SZZ2509}.

\medskip
\begin{lemma}
\label{lem:V-lower-coprime}
Fix $l\in \N$ with $\gcd(l,n)=1$. Then
$
|V^{(l)}|\ge |A|.
$
\end{lemma}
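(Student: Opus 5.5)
The plan is to argue by contradiction, combining Lemma~\ref{lem:geodesic-lift} with Lemma~\ref{lem:n-divisible-lifting} and a B\'ezout argument in the quotient group $q(\mathcal{B}(G_0))$. Suppose that $|V^{(l)}|\le |A|-1$ for some $l\in\N$ with $\gcd(l,n)=1$. Then the hypothesis of Lemma~\ref{lem:geodesic-lift} holds, since $\min\{|\widehat{W}^{(l)}|,|V^{(l)}|\}\le |V^{(l)}|\le |A|-1$. That lemma therefore gives
\[
A^l\in q\!\bigl(\mathcal{B}(G_0)_{|A|-1}\bigr).
\]
Here $V^{(l)}$ is a zero-sum sequence of length at most $|A|-1$, and this case was already handled inside the proof of that lemma through the identity involving $\widehat{W}^{(l)}V^{(l)}$, $P^{(l)}C^{(l)}$ and $Q^{(l)}C^{(l)}$. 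So nothing new needs to be checked at this step.

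Second, I would observe that $A^n=\prod_{i=1}^k g_i^{nm_i}$ is a zero-sum sequence over $G_0$ whose multiplicities are all divisible by $n$. Since $|A|=\beta_{\mathrm{sep}}(G)$, Lemma~\ref{lem:n-divisible-lifting} applies directly and yields
\[
A^n\in q\!\bigl(\mathcal{B}(G_0)_{|A|-1}\bigr).
\]

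Third, since $\gcd(l,n)=1$, I would pick integers $a,b\in\mathbb{Z}$ with $al+bn=1$. Because $q(\mathcal{B}(G_0)_{|A|-1})$ is a subgroup of the abelian group $q(\mathcal{B}(G_0))$, we get
\[
A=(A^l)^a\,(A^n)^b\in q\!\bigl(\mathcal{B}(G_0)_{|A|-1}\bigr),
\]
where negative exponents are allowed. This contradicts the assumption that $A$ is a separating atom, so $|V^{(l)}|\ge |A|$.

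I do not expect a serious obstacle; the statement is essentially a corollary of the lifting lemmas already proved. The only point that needs care is making sure the case $|V^{(l)}|\le|A|-1$ of Lemma~\ref{lem:geodesic-lift} really produces $A^l$ itself, and not merely some sequence related to $A^l$, inside the subgroup. This holds by the exact identity \eqref{GL2} together with \eqref{GL3}. The passage from $A^l$ and $A^n$ to $A$ then needs the group structure of the quotient, which is exactly why separation is formulated in $q(\mathcal{B}(G_0))$. For $l=1$ the argument reduces to the one already used in Proposition~\ref{thm:odd-unconditional}.
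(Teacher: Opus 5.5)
Your proof is correct and follows essentially the same route as the paper. You argue by contradiction, get $A^l\in q(\mathcal{B}(G_0)_{|A|-1})$ from the short $V^{(l)}$, get $A^n\in q(\mathcal{B}(G_0)_{|A|-1})$ from Lemma~\ref{lem:n-divisible-lifting}, and finish with B\'ezout; the paper writes $ll'=1+hn$ and $A=(A^{hn})^{-1}(V^{(l)})^{-l'}(A^lV^{(l)})^{l'}$. The only difference is that you obtain $A^l$ via Lemma~\ref{lem:geodesic-lift}, whereas the paper uses directly that $A^lV^{(l)}$ is an $n$-divisible zero-sum sequence, which skips the detour through $\widehat{W}^{(l)}$, $P^{(l)}$, $Q^{(l)}$ and $C^{(l)}$.
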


\begin{proof}
Choose \(l'\in [1,n-1]\) and \(h\in \N_0\) such that
$
ll'=1+hn.
$
The sequence \(A^{hn}\) is zero-sum and all its
multiplicities are divisible by \(n\),
so by Lemma~\ref{lem:n-divisible-lifting},
$
A^{hn}\in q\!\bigl(\mathcal{B}(G_0)_{|A|-1}\bigr).
$
Also
$
A^lV^{(l)}=\prod_{i=1}^k g_i^{\,n(k_i^{(l)}+t_i^{(l)})}
$
is an $n$-divisible zero-sum sequence, hence again
$
A^lV^{(l)}\in q\!\bigl(\mathcal{B}(G_0)_{|A|-1}\bigr).
$
If \(|V^{(l)}|\le |A|-1\),
then \(V^{(l)}\in \mathcal{B}(G_0)_{|A|-1}\), and therefore
\[
A=(A^{hn})^{-1}(V^{(l)})^{-l'}(A^lV^{(l)})^{l'}
\in
q\!\bigl(\mathcal{B}(G_0)_{|A|-1}\bigr),
\]
contradicting that \(A\) is a separating atom. Thus \(|V^{(l)}|\ge |A|\).
\end{proof}

\medskip
The following proposition demonstrates that
if $|A|$ strictly exceeds the target bound,
a coprime power of $A$ is forced into the quotient group.

\medskip
\begin{proposition}
\label{thm:A6-free}
Assume
$
T+\frac{n}{p_1} < |A| \le T+\frac{n}{2},
$
where $p_1$ is the minimal prime divisor of $n_1$.
Then there exists an integer $m\in \N$ such that
\[
\gcd(m,n_1)=1
\qquad\text{and}\qquad
A^m \in q\!\bigl(\mathcal{B}(G_0)_{|A|-1}\bigr).
\]
\end{proposition}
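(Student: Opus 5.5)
The plan is to apply Lemma~\ref{lem:geodesic-lift} to a suitable $l$ with $\gcd(l,n_1)=1$. By Lemma~\ref{lem:surrogate-basic}(b), $|\widehat W^{(l)}|\le T-sn+f(l)$. By Lemma~\ref{lem:V-upper}, $|V^{(l)}|\le T+(k-s)n-f(l)$. Put $\delta=|A|-T$; the hypothesis says $\frac{n}{p_1}<\delta\le\frac n2$. Then $\min\{|\widehat W^{(l)}|,|V^{(l)}|\}\le|A|-1$ as soon as
\[
f(l)< sn+\delta \qquad\text{or}\qquad f(l)>(k-s)n-\delta .
\]
Since $k\le 2s+1$, the only bad $l$ are those with $f(l)$ in the window $[\,sn+\delta,\,(s+1)n-\delta\,]$. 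This window has length at most $n-2\delta<n-\frac{2n}{p_1}$. The whole task is therefore to find $m$ coprime to $n_1$ with $f(m)$ outside this window; Lemma~\ref{lem:geodesic-lift} then gives $A^m\in q(\mathcal B(G_0)_{|A|-1})$. If $f(1)$ is already outside, take $m=1$.

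Otherwise I would move along the progression $l_j=1+j\,n'$ with $n'=n/p_1$ and $j\in[0,p_1-1]$. Every prime $q\ne p_1$ dividing $n_1$ divides $n'$, so $l_j\equiv 1\pmod q$. If $p_1\mid n'$, then every $l_j$ is coprime to $n_1$. If $p_1\nmid n'$, exactly one $j$ gives $p_1\mid l_j$, and I will exclude it (or replace $n'$ by a multiple of $p_1$, whichever is cleaner).

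Passing from $l_j$ to $l_{j+1}$ adds $n'm_i$ to $l m_i$. Hence each remainder $x_i^{(l_j)}$ runs through an arithmetic progression of step $n'm_i \bmod n$ inside a coset of the subgroup of $\mathbb Z/n$ generated by $n'm_i$, which has order $1$ or $p_1$. So $f(l_j)$ changes along $j$ by controlled amounts. Each coordinate with $p_1\nmid m_i$ contributes a full cycle of residues of a coset of $n'\mathbb Z/n\mathbb Z$, whose values are spaced exactly $n'=n/p_1$ apart. The coordinates with $p_1\mid m_i$ stay fixed.

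I would then argue that $f(l_j)$ cannot stay inside a window of length $<n-2n/p_1$ for all admissible $j$. There are two ways to do this. The first is averaging: the mean of $f(l_j)$ over a full cycle equals a fixed value determined by the cosets, and each moving coordinate wraps around, dropping by about $n-n'$ at one step. The second is to track a single step $j\to j+1$: $f$ increases by $n'\cdot\#\{\text{non-wrapping moving } i\}$ and decreases by $(n-n')$ times the number of wrapping coordinates. Since the admissible $j$ come in at least $p_1-1\ge1$ consecutive steps, a jump of size $n'$ or a drop of size $\ge n-n'$ must push $f$ out of the window at some coprime $l_j$.

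I expect this combinatorial step to be the main obstacle, and two points need care. One is the case $p_1=2$, where the window $[sn+\delta,(s+1)n-\delta]$ can be very short but the progression has only two terms, one of which may be excluded. The other is the degenerate case where all $m_i$ are divisible by $p_1$, so that $f(l_j)$ is constant in $j$. For the latter I would fall back on Lemma~\ref{lem:V-lower-coprime} for coprime $l$, together with the symmetry $f(l)+f(-l)=n\cdot\#\{i: n\nmid lm_i\}$ applied to $l$ and $n-l$ (both coprime to $n_1$ when $l$ is). This should pin $f(1)$ against one end of the window and give a contradiction, using $\sigma(A)=0$ and the fact that $A$ is an atom.
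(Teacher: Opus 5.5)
Your reduction is correct and matches the paper's. By Lemma~\ref{lem:surrogate-basic}(b), Lemma~\ref{lem:V-upper} and Lemma~\ref{lem:geodesic-lift}, it suffices to find $l$ with $\gcd(l,n_1)=1$ and $f(l)\notin[sn+\delta,(k-s)n-\delta]$. The case $p_1=2$ is no concern, since the hypothesis is then vacuous.

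The gap is in the step you call the main obstacle. The assertion that $f(l_j)$, with $l_j=1+jn/p_1$, must leave the window is false for the remainder data your argument uses. Take $n=27$, $p_1=3$, $s=1$, $k=3$, $\delta=12$ (window $[39,42]$) and $m_i\equiv 10,11,18\pmod{27}$, so that $\sum_i m_i\equiv\delta$. For $l=1,10,19$ the remainder vectors are $(10,11,18)$, $(19,2,18)$, $(1,20,18)$, so $f(l_j)=39$ for every $j$. The moving coordinates jump by $+9$ and $-9$ and cancel, so neither averaging nor step-tracking can force an exit. Note that this is not the case where all $m_i$ are divisible by $p_1$.

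Your fallback fails as well. We have $f(-l)\equiv -f(l)\pmod n$, and the residues occupied by the window form the range $[\delta,n-\delta]$, which is symmetric under negation. So the identity $f(l)+f(-l)=n\cdot\#\{i\,|\,n\nmid lm_i\}$, together with Lemma~\ref{lem:V-lower-coprime} (which only yields $f(l)\le (k-s)n-\delta$), gives no contradiction.

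The missing idea is the congruence $f(l)\equiv l\sum_i m_i=l|A|\equiv l\delta\pmod n$, which holds because $x_i^{(l)}\equiv lm_i$ and $n\mid T$. It reduces the task to finding an admissible $l$ for which $l\delta \bmod n$ lies outside $[\delta,n-\delta]$. It also explains why your progression is useless when $p_1\mid\delta$: then $l_j\delta\equiv\delta\pmod n$ for all $j$.

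The paper writes $d=\gcd(\delta,n)$ and $\delta=bd$.
\begin{itemize}
\item If $b\ge 2$, \cite[Lemma~4.2]{Sch25} gives $l$ coprime to $n$ with $lb\equiv 1\pmod{n/d}$, so $f(l)\equiv d\in[1,\delta-1]$. In your example, $l=7$ gives $f(7)=57\equiv 3$.
\item If $b=1$, it takes $m=n/\delta<p_1$. This $m$ is coprime to $n_1$, though possibly not to $n$, and $f(m)\equiv 0\pmod n$, so $f(m)$ again misses the window.
\end{itemize}
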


\begin{proof}
Since $n=n_s$ divides $n_j$ for every $j\in\{s+1,\dots,r\}$, we have
$
n\mid T.
$
Write
$
|A| = T+\delta
$
with
$
\frac{n}{p_1}<\delta\le \frac{n}{2}.
$
Let
\[
d=\gcd(|A|,n)=\gcd(\delta,n),
\qquad
\delta = bd
\]
with $b\in \N$ and $\gcd\!\left(b,\frac{n}{d}\right)=1$.

\smallskip
\noindent
\textbf{Case 1: $b\ge 2$.}
By \cite[Lemma~4.2]{Sch25}, there exists
$
l\in \{1,2,\dots,n-1\}
$
such that
\[
\gcd(l,n)=1
\qquad\text{and}\qquad
lb \equiv 1 \pmod{n/d}.
\]
This gives
\begin{equation}
\label{A61}
l\delta = lbd \equiv d \pmod n.
\end{equation}
Since $\gcd(l,n)=1$, Lemma~\ref{lem:V-lower-coprime} gives
$
|V^{(l)}|\ge |A|=T+\delta.
$
Combining this with Lemma~\ref{lem:V-upper},
$
T+\delta
\le
|V^{(l)}|
\le
T+(k-s)n-f(l),
$
hence
\begin{equation}
\label{A62}
f(l)\le (k-s)n-\delta \le (s+1)n-\delta.
\end{equation}
Also, since $x_i^{(l)}\equiv l m_i \pmod n$
 for each $i$, and since $n\mid T$, we get from
\eqref{A61} that
$
f(l)=\sum_{i=1}^k x_i^{(l)}
\equiv
\sum_{i=1}^k l m_i
=
l|A|
=
l(T+\delta)
\equiv
l\delta
\equiv d
\pmod n.
$
Write
$
f(l)=d+qn
~\text{with }q\in\N_0.
$
Then \eqref{A62} implies
$
d+qn\le (s+1)n-\delta.
$
Since $d+\delta>0$, we must have $q\le s$. Therefore
$
f(l)\le sn+d.
$
Now Lemma~\ref{lem:surrogate-basic} gives
$
|\widehat{W}^{(l)}|
\le
T-sn+f(l)
\le
T+d.
$
Since $b\ge 2$,
$
T+d < T+bd = T+\delta = |A|.
$
Therefore
$
|\widehat{W}^{(l)}|\le |A|-1.
$
By Lemma~\ref{lem:geodesic-lift},
$
A^l \in q\!\bigl(\mathcal{B}(G_0)_{|A|-1}\bigr).
$
Finally, $\gcd(l,n)=1$ implies $\gcd(l,n_1)=1$ because $n_1\mid n$.
Thus we may take $m=l$.

\smallskip
\noindent
\textbf{Case 2: $b=1$.}
In this case, we have
\[
\delta=d,
\qquad
|A|=T+d,
\qquad
\frac{n}{p_1}<d\le \frac{n}{2}.
\]
Set
$
m=\frac{n}{d},
$
thus
$
m < p_1
$
and
$
\gcd(m,n_1)=1.
$
Since $x_i^{(m)}\equiv m m_i \pmod n$
for each $i$, and since $n\mid T$, we have
\[
f(m)\equiv m|A| = m(T+\delta)\equiv m\delta = md = n \equiv 0 \pmod n.
\]
Hence $f(m)$ is divisible by $n$.
By Lemma  \ref{lem:V-upper},
\begin{equation}
\label{A66}
|\widehat{W}^{(m)}|+|V^{(m)}|
\le
2T+n.
\end{equation}
Now both $|\widehat{W}^{(m)}|$ and $|V^{(m)}|$ are divisible by $n$:
the first because
$
|\widehat{W}^{(m)}| = n|U^{(m)}|+f(m),
$
and the second because
$
|V^{(m)}| = n\sum_{i=1}^k t_i^{(m)} - f(m).
$
If both were strictly larger than $T$,
then, because $n\mid T$ and both lengths are divisible by $n$,
they would both be at least $T+n$, contradicting \eqref{A66}.
Hence
$
\min\{\,|\widehat{W}^{(m)}|,\ |V^{(m)}|\,\}\le T.
$
Since
$
T< T+d = |A|,
$
we obtain
$
\min\{\,|\widehat{W}^{(m)}|,\ |V^{(m)}|\,\}\le |A|-1.
$
Lemma~\ref{lem:geodesic-lift} now gives
$
A^m \in q\!\bigl(\mathcal{B}(G_0)_{|A|-1}\bigr).
$
This completes the proof.
\end{proof}

\medskip
With Proposition~\ref{thm:A6-free} established,
we have shown that a coprime power of
the atom $A$ is forced into
$q(\mathcal{B}(G_0)_{|A|-1})$.
To translate this membership into an
explicit sequence factorization, we must analyze
the algebraic structure of the underlying
zero-sum relations. We now introduce
the necessary notation and a uniform
short generation lemma to facilitate this final step.
For a finite abelian group $H$ and a sequence
$a_1,\dots,a_k$ of elements of $H$
(not necessarily distinct), write
\[
\mathcal{G}_H(a_1,\dots,a_k)
=
\left\{
\mathbf{u}=(u_1,\dots,u_k)\in \mathbb{Z}^k\,\Big{|}\,
\sum_{i=1}^k u_i a_i=0 \text{ in } H
\right\}.
\]
Also write
$
\mathcal{B}_H(a_1,\dots,a_k)
=
\mathcal{G}_H(a_1,\dots,a_k)\cap \mathbb{N}_0^k.
$
For $\mathbf{u}=(u_1,\dots,u_k)\in \mathbb{N}_0^k$,   define
\[
|\mathbf{u}|=u_1+\cdots+u_k
~\hbox{
and
}~
\supp(\mathbf{u})=\{\,i\in [1,k]\,|\,u_i\ne 0\,\}.
\]
Let $\kappa(H)$ denote the Helly dimension of $H$;
one has
$
\kappa(H)=\mathsf{rank}(H)+1
$
by \cite[\S 2]{Domokos17}.

\medskip
\begin{lemma}
\label{lem:uniform-short-generation}
Let $H$ be a finite abelian group, and set
$
d=\beta_{\mathrm{sep}}(H).
$
Then for every positive integer $k$ and every sequence
$
a_1,\dots,a_k\in H
$
(not necessarily distinct), the abelian group
$
\mathcal{G}_H(a_1,\dots,a_k)
$
is generated by the subset
$
\{\,\mathbf{u}\in \mathcal{B}_H(a_1,\dots,a_k)\,|\,|\mathbf{u}|\le d\,\}.
$
\end{lemma}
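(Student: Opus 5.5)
We reduce the statement to Lemma~\ref{reduced}, applied to subsets of $H$ of size at most $\mathsf{rank}(H)+1$, together with a Helly-type decomposition that brings arbitrary supports down to small ones.

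First, we dispose of repeated entries. If $a_i=a_j$ for some $i\neq j$, then $\mathcal{G}_H(a_1,\dots,a_k)$ is generated by the vector $e_i-e_j$ together with any lift of generators of the relation group of the sequence with $a_j$ deleted. We cannot use $e_i-e_j$ itself, since it is not in $\mathbb{N}_0^k$. Instead we write it as a difference of two nonnegative relations of length at most $d$:
\[
e_i-e_j=(e_i+(\ord(a_i)-1)e_j)-\ord(a_i)e_j .
\]
Here $|\ord(a_i)e_j|=\ord(a_i)\le \exp(H)\le d$; the last inequality holds because $\beta_{\mathrm{sep}}(H)\ge \exp(H)$, which follows from the cyclic case. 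The other vector also has length $\ord(a_i)$. Zero entries are handled similarly: $a_i=0$ gives $e_i$ itself, of length $1$. So we may assume the $a_i$ are distinct and nonzero, and we set $G_0=\{a_1,\dots,a_k\}$. Then $\mathcal{G}_H(a_1,\dots,a_k)$ is canonically $q(\mathcal{B}(G_0))$, and the claim becomes
\[
q(\mathcal{B}(G_0))=q(\mathcal{B}(G_0)_d).
\]

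Second, for $|G_0|\le \mathsf{rank}(H)+1$ this is immediate from Lemma~\ref{reduced}, by induction on the length of atoms. The group $q(\mathcal{B}(G_0))$ is generated by the atoms over $G_0$. Suppose an atom $A$ has $|A|>d$. It cannot be a separating atom, since by Lemma~\ref{reduced} every separating atom over such a $G_0$ has length at most $d$. Hence $A\in q(\mathcal{B}(G_0)_{|A|-1})$, and descending induction on length finishes this case.

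Third, the main obstacle is general $k$, beyond the Helly dimension. We would invoke Domokos' reduction in its underlying form \cite{Domokos17}. The statement that $\beta_{\mathrm{sep}}$ is attained on subsets of size at most $\kappa(H)$ comes from showing that for any $G_0$, an atom $A\notin q(\mathcal{B}(G_0)_{|A|-1})$ forces the existence of a subset $G_0'\subseteq G_0$ with $|G_0'|\le\kappa(H)$ carrying a separating atom of length at least $|A|$. The separating characterization (a separating set in degree $\le d$ for the representation indexed by $G_0$) is equivalent to $q(\mathcal{B}(G_0)_d)=q(\mathcal{B}(G_0))$. This follows because monomials of degree $\le d$ separate orbits exactly when their exponent vectors generate the full character lattice of relations, and the Helly argument shows this holds for all $G_0$ once it holds for small ones.

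So the plan is to cite the definition of $\beta_{\mathrm{sep}}(H)$ directly. There is a separating set of degree $\le d$ for the $k$-dimensional representation with characters $a_i$. By Domokos' lattice criterion, the exponent vectors of invariant monomials of degree $\le d$ generate $\mathcal{G}_H(a_1,\dots,a_k)$. The difficulty is verifying this criterion cleanly: we must show that a proper sublattice $\Lambda\subsetneq\mathcal{G}_H$ yields two points in $(\mathbb{C}^\times)^k$ with different orbits that agree on all monomials from $\Lambda$. For this we would use a character of $\mathbb{Z}^k/\Lambda$ that is nontrivial on $\mathcal{G}_H/\Lambda$, realized as a torus point.
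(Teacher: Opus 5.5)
Your final plan is correct, and it is a genuinely shorter route than the paper's.

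\textbf{The paper's route.} The paper first proves that $\mathcal{G}_H((a_j)_{j\in J})$ is generated by short nonnegative relations for every $J$ with $|J|\le\kappa(H)$. Distinct entries are handled by Domokos' Corollary~2.6, and repeated entries via the vector $(1,q-1,0,\dots,0)$. It then uses Domokos' Helly-type Theorem~2.1 to deduce that the monomials $\mathbf{x}^{\mathbf{u}}$, $\mathbf{u}\in M$, separate orbits on the diagonal module $V$ with weights $\iota(a_1),\dots,\iota(a_k)$. Only then does it apply Proposition~2.4 to get generation.

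\textbf{Your route.} You notice that separation is already given by the definition of $\beta_{\mathrm{sep}}(H)$, because $V$ is just some $k$-dimensional $H$-module, and repeated weights are allowed. So only the direction ``separating $\Rightarrow$ generating'' is needed, and your torus-point argument proves it. Suppose $\Lambda=\langle M\rangle\subsetneq\mathcal{G}_H$. Divisibility of $\mathbb{C}^\times$ gives $\psi:\mathbb{Z}^k/\Lambda\to\mathbb{C}^\times$ nontrivial on $\mathcal{G}_H/\Lambda$. The point $t=(\psi(\bar{\mathbf{e}}_i))_i$ agrees with $(1,\dots,1)$ on all $\mathbf{x}^{\mathbf{u}}$ with $\mathbf{u}\in\Lambda$. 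Yet $t$ is not in the orbit of $(1,\dots,1)$, since every point $h\cdot(1,\dots,1)$ gives value $1$ to every $\mathbf{x}^{\mathbf{w}}$ with $\mathbf{w}\in\mathcal{G}_H$. This makes the Helly dimension and the small-support claim unnecessary. It also makes your Steps~1--2 redundant; they mirror the paper's claim, with Lemma~\ref{reduced} plus descending induction on atom length in place of Corollary~2.6.

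\textbf{Two points to fix.}
\begin{itemize}
\item State the one missing link explicitly. For a diagonal action every invariant of degree $\le d$ is a linear combination of invariant monomials of degree $\le d$, so these monomials themselves separate.
\item Your aside that such monomials separate ``exactly when'' their exponents generate $\mathcal{G}_H$ is false in the converse direction on all of $V$. For $H=C_2$ with two copies of the nontrivial character, $(1,1)$ and $(2,0)$ generate $\mathcal{G}_H$ but do not separate $(0,1)$ from $(0,0)$. The converse needs generation for every sub-support, which is what Theorem~2.1 supplies. This is harmless, since you only use the forward direction.
\end{itemize}

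The paper's route stays inside Domokos' small-support combinatorial framework. Yours is shorter and essentially self-contained.
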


\begin{proof}
If $H=\{0\}$, then every $a_i=0$, so
$
\mathcal{G}_H(a_1,\dots,a_k)=\mathbb{Z}^k.
$
This group is generated by the standard basis
vectors $\mathbf{e}_1,\ldots,\mathbf{e}_k$, and each
$\mathbf{e}_i$ lies in $\mathcal{B}_H(a_1,\dots,a_k)$ with
$
|\mathbf{e}_i|=1=\beta_{\mathrm{sep}}(H).
$
Hence the assertion holds.
Assume now that $H\neq \{0\}$, and set
\[
d=\beta_{\mathrm{sep}}(H),
\qquad
M=\{\,\mathbf{u}\in \mathcal{B}_H(a_1,\dots,a_k)\,|\,
|\mathbf{u}|\le d\,\}.
\]
For every subset $J\subseteq [1,k]$, set
$
M_J=\{\,\mathbf{u}\in M\,|\,\supp(\mathbf{u})\subseteq J\,\}.
$
We first claim that for every $J\subseteq [1,k]$ with $|J|\le \kappa(H)$, the group
$
\mathcal{G}_H\bigl((a_j)_{j\in J}\bigr)
$
is generated by $M_J$.
After reindexing we may assume that
$
J=[1,s]
~\text{with}~
s\le \kappa(H).
$
We prove the claim by induction on $s$.
If $a_1,\dots,a_s$ are distinct, then the claim is exactly \cite[Corollary~2.6]{Domokos17},
because $d=\beta_{\mathrm{sep}}(H)$.

Suppose now that the sequence $a_1,\dots,a_s$ is not distinct.
After a permutation of the indices, we may assume that
$
a_1=a_2.
$
Let
$
q=\ord(a_1).
$
Applying \cite[Corollary~2.6]{Domokos17} to
the one-term distinct sequence $(a_1)$ shows that
the group
$
\mathcal{G}_H(a_1)=q\mathbb{Z}
$
is generated by nonnegative relations of length at most $d$.
Since its smallest positive generator is $q$, we must have
$
q\le d.
$
Now define
\[
\mathbf{n}=(1,q-1,0,\dots,0)\in \mathbb{N}_0^s.
\]
Because $a_1=a_2$, we have
$
a_1+(q-1)a_2=qa_1=0,
$
so
$
\mathbf{n}\in \mathcal{B}_H(a_1,\dots,a_s)
$
and
$
|\mathbf{n}|=q\le d.
$
Thus
$
\mathbf{n}\in M_{[1,s]}.
$
Take any
$
\mathbf{u}=(u_1,\dots,u_s)\in \mathcal{G}_H(a_1,\dots,a_s).
$
Set
$
\widetilde{\mathbf{u}}=\mathbf{u}-u_1\mathbf{n}.
$
Then the first coordinate of
$\widetilde{\mathbf{u}}$ is zero, and
$
\sum_{i=1}^s \widetilde u_i a_i
=
\sum_{i=1}^s u_i a_i-u_1qa_1
=
0.
$
Hence
$
\widetilde{\mathbf{u}}\in \mathcal{G}_H(a_1,\dots,a_s)
$
and its first coordinate is zero,
so it is the zero-extension of an element of
$
\mathcal{G}_H(a_2,\dots,a_s).
$
By the induction hypothesis, this element of $\mathcal{G}_H(a_2,\dots,a_s)$ lies in the subgroup generated by
the vectors in
$
\mathcal{B}_H(a_2,\dots,a_s)
$
of length at most $d$.
After adding a zero first coordinate, these generators become elements of
$
M_{[2,s]}\subseteq M_{[1,s]}.
$
Therefore $\widetilde{\mathbf{u}}$ lies in the subgroup generated by $M_{[2,s]}$.
Since $\mathbf{n}\in M_{[1,s]}$, it follows that
$
\mathbf{u}=\widetilde{\mathbf{u}}+u_1\mathbf{n}
$
lies in the subgroup generated by $M_{[1,s]}$.
This proves the claim.

Now fix an isomorphism
\[
\iota:H\stackrel{\sim}{\longrightarrow}\widehat{H}=\Hom(H,\mathbb{C}^\times),
\]
and set
$
\chi_i=\iota(a_i)
$
for
$1\le i\le k.$
Then for every $\mathbf{u}=(u_1,\dots,u_k)\in \mathbb{Z}^k$ we have
\[
\mathbf{u}\in \mathcal{G}_H(a_1,\dots,a_k)
\quad\Longleftrightarrow\quad
\sum_{i=1}^k u_i a_i=0\text{ in }H
\quad\Longleftrightarrow\quad
\prod_{i=1}^k \chi_i^{u_i}=1.
\]
So under this identification,
\[
\mathcal{G}_H(a_1,\dots,a_k)=G(\chi_1,\dots,\chi_k)
\]
in the notation of \cite{Domokos17}.
Consider the diagonal $H$-module $V$ with weight sequence
$
\chi_1,\dots,\chi_k.
$
By the claim above, for every subset $J\subseteq [1,k]$ with $|J|\le \kappa(H)$,
the group
$
G\bigl((\chi_j)_{j\in J}\bigr)
$
is generated by $M_J$.
Therefore \cite[Theorem~2.1]{Domokos17} implies that the monomials
$
\{\,\mathbf{x}^\mathbf{u}\,|\,\mathbf{u}\in M\,\}
$
form a separating set in $\mathcal{O}(V)^H$.
Applying \cite[Proposition~2.4]{Domokos17}, we conclude that the full relation group
$
\mathcal{G}_H(a_1,\dots,a_k)=G(\chi_1,\dots,\chi_k)
$
is generated by $M$.
\end{proof}
\medskip
As a  consequence, we obtain the following
corollary.

\medskip
\begin{corollary}
\label{cor:d-divisible-lifting}
Let
$
G\cong C_{n_1}\oplus \cdots \oplus C_{n_r}
$
with $
1<n_1\mid \cdots \mid n_r,
$
let
$
G_0=\{g_1,\dots,g_k\}\subseteq G,
$
and let $A$ be a separating atom over $G_0$ with
$
|A|=\beta_{\mathrm{sep}}(G).
$
Let $d\in \mathbb{N}$, and assume
$
d\,\beta_{\mathrm{sep}}(dG)\le |A|-1.
$
If
$
S=\prod_{i=1}^k g_i^{d s_i}\in \mathcal{B}(G_0)
$
for some $s_1,\dots,s_k\in \mathbb{N}_0$, then
$
S\in q\!\bigl(\mathcal{B}(G_0)_{|A|-1}\bigr).
$
\end{corollary}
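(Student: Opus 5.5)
We apply Lemma~\ref{lem:uniform-short-generation} to the subgroup $H=dG$ and the elements $a_i=dg_i\in dG$ for $1\le i\le k$. These elements need not be distinct, which is exactly why that lemma was stated for arbitrary sequences.

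Since $S=\prod_{i=1}^k g_i^{ds_i}$ is zero-sum, we have $\sum_i s_i(dg_i)=0$ in $dG$. So $\mathbf{s}=(s_1,\dots,s_k)\in \mathcal{G}_{dG}(dg_1,\dots,dg_k)$. With $e=\beta_{\mathrm{sep}}(dG)$, Lemma~\ref{lem:uniform-short-generation} gives an integer combination
\[
\mathbf{s}=\sum_{j} c_j\mathbf{u}^{(j)}, \qquad c_j\in\mathbb{Z},\quad \mathbf{u}^{(j)}\in \mathcal{B}_{dG}(dg_1,\dots,dg_k),\quad |\mathbf{u}^{(j)}|\le e .
\]
If $dG=\{0\}$, the convention $\beta_{\mathrm{sep}}(\{0\})=1$ is already built into that lemma, so no separate treatment is needed.

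Next we lift each generator back to $G_0$. For each $j$, put $U_j=\prod_{i=1}^k g_i^{d u^{(j)}_i}\in\mathcal{F}(G_0)$. Its sum is $\sum_i u^{(j)}_i (dg_i)=0$, so $U_j\in\mathcal{B}(G_0)$. Its length is $d|\mathbf{u}^{(j)}|\le d\beta_{\mathrm{sep}}(dG)\le |A|-1$. Hence $U_j\in\mathcal{B}(G_0)_{|A|-1}$.

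It remains to transport the integer relation into $q(\mathcal{B}(G_0))$. The map $\mathbb{Z}^k\to q(\mathcal{F}(G_0))$, $\mathbf{v}\mapsto \prod_i g_i^{dv_i}$, is a group homomorphism. It carries $\mathbf{s}$ to $S$ and $\mathbf{u}^{(j)}$ to $U_j$. Therefore $S=\prod_j U_j^{c_j}$ holds in $q(\mathcal{F}(G_0))$.

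We then pull this identity back into the subgroup. Because $\mathcal{B}(G_0)$ is a saturated (divisor-closed in the quotient sense) submonoid of the free monoid $\mathcal{F}(G_0)$, the group $q(\mathcal{B}(G_0))$ embeds into $q(\mathcal{F}(G_0))$. Concretely, we separate positive and negative $c_j$ to obtain an honest equality of sequences
\[
S\cdot\prod_{c_j<0}U_j^{-c_j}=\prod_{c_j>0}U_j^{c_j}
\]
in $\mathcal{F}(G_0)$, with every factor lying in $\mathcal{B}(G_0)$. It follows that $S\in q(\mathcal{B}(G_0)_{|A|-1})$.

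The main point to check carefully is the bookkeeping in this last step. The $g_i$ are distinct elements of $G_0$, so exponent vectors in $\mathbb{N}_0^k$ correspond bijectively to sequences over $G_0$, and the equality of exponent vectors is indeed an equality in $\mathcal{F}(G_0)$. Beyond that, everything reduces to the short-generation lemma; the hypothesis $d\beta_{\mathrm{sep}}(dG)\le|A|-1$ is used only for the length estimate on the $U_j$.
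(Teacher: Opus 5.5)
Your proposal is correct and follows the paper's proof essentially step for step. You apply Lemma~\ref{lem:uniform-short-generation} to $H=dG$ with $a_i=dg_i$, lift each short nonnegative relation $\mathbf{u}$ to the zero-sum sequence $\prod_i g_i^{du_i}$ of length at most $d\,\beta_{\mathrm{sep}}(dG)\le |A|-1$, and split the integer combination into positive and negative parts to obtain an identity $S\cdot\prod U=\prod V$ in $\mathcal{F}(G_0)$. Your remark about saturation is not needed, since the quotient group of any submonoid of a cancellative commutative monoid embeds in that of the ambient monoid, but it does no harm because you then carry out the explicit splitting anyway.
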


\begin{proof}
Set
$
a_i=d g_i\in dG
$
for
$
1\le i\le k,
$
and
$
\mathbf{s}=(s_1,\dots,s_k)\in \mathbb{N}_0^k.
$
Since $S$ is zero-sum in $G$,
\[
\sum_{i=1}^k s_i a_i
=
\sum_{i=1}^k d s_i g_i
=
\sigma(S)
=
0,
\]
hence
$
\mathbf{s}\in \mathcal{B}_{dG}(a_1,\dots,a_k)\subseteq \mathcal{G}_{dG}(a_1,\dots,a_k).
$
By Lemma~\ref{lem:uniform-short-generation}, the group
$
\mathcal{G}_{dG}(a_1,\dots,a_k)
$
is generated by the elements
$
\mathbf{u}\in \mathcal{B}_{dG}(a_1,\dots,a_k)
$
with
$
|\mathbf{u}|\le \beta_{\mathrm{sep}}(dG).
$
Therefore there exist
\[
\mathbf{u}^{(1)},\ldots,\mathbf{u}^{(p)},
\ \mathbf{v}^{(1)},\dots,\mathbf{v}^{(q)}
\in \mathcal{B}_{dG}(a_1,\dots,a_k)
\]
such that
$
|\mathbf{u}^{(\alpha)}|,\ |\mathbf{v}^{(\beta)}|\le \beta_{\mathrm{sep}}(dG)
$
for all $1\leq \alpha\leq p, 1\leq \beta\leq q$, and
\begin{equation}
\label{DL1}
\mathbf{s}+\mathbf{u}^{(1)}+\cdots+\mathbf{u}^{(p)}
=\mathbf{v}^{(1)}+\cdots+\mathbf{v}^{(q)}
\quad\text{in }\mathbb{Z}^k.
\end{equation}
Indeed, since $\mathbf{s}$ lies in the
subgroup of $\mathbb{Z}^k$ generated by these short nonnegative relations,
we can write $\mathbf{s}=\sum_j \alpha_j \mathbf{w}^{(j)}$ with $\alpha_j\in\mathbb{Z}$,
$\mathbf{w}^{(j)}\in\mathcal{B}_{dG}(a_1,\dots,a_k)$ and
$|\mathbf{w}^{(j)}|\le \beta_{\mathrm{sep}}(dG)$; splitting the coefficients into positive and negative parts yields
\eqref{DL1}.
For each $\alpha$ and $\beta$, define sequences over $G_0$ by
\[
U^{(\alpha)}
=
\prod_{i=1}^k g_i^{d\,u_i^{(\alpha)}},
\qquad
V^{(\beta)}
=
\prod_{i=1}^k g_i^{d\,v_i^{(\beta)}}.
\]
Since
$
\sum_{i=1}^k u_i^{(\alpha)} a_i=0
$
and
$
\sum_{i=1}^k v_i^{(\beta)} a_i=0
$
in $dG$, both $U^{(\alpha)}$ and $V^{(\beta)}$ belong to $\mathcal{B}(G_0)$.
Moreover,
\[
|U^{(\alpha)}|
=
d\,|\mathbf{u}^{(\alpha)}|
\le
d\,\beta_{\mathrm{sep}}(dG)
\le
|A|-1,
\]
and similarly
$
|V^{(\beta)}|\le |A|-1.
$
Thus
$
U^{(\alpha)},\ V^{(\beta)}\in \mathcal{B}(G_0)_{|A|-1}.
$
Finally, \eqref{DL1} implies the identity of sequences
$
S\cdot U^{(1)}\cdots U^{(p)}=V^{(1)}\cdots V^{(q)}.
$
Hence, in the quotient group $q(\mathcal{B}(G_0))$,
\[
S
=
V^{(1)}\cdots V^{(q)}
\cdot
\bigl(U^{(1)}\cdots U^{(p)}\bigr)^{-1}
\in
q\!\bigl(\mathcal{B}(G_0)_{|A|-1}\bigr).
\]
This proves the corollary.
\end{proof}

\medskip
Specializing Corollary \ref{cor:d-divisible-lifting} to $d=n_1$,
we immediately obtain the following crucial membership
for the $n_1$-th power of our separating atom.

\medskip
\begin{corollary}
\label{cor:n1-power-lifting}
Let
$
G\cong C_{n_1}\oplus \cdots \oplus C_{n_r}$
with
$
1<n_1\mid \cdots \mid n_r,
$
let
$
G_0=\{g_1,\dots,g_k\}\subseteq G,
$
and let $A=\prod_{i=1}^k g_i^{m_i}$ be a separating atom over $G_0$ with
$
|A|=\beta_{\mathrm{sep}}(G).
$
If
$
n_1\,\beta_{\mathrm{sep}}(n_1G)\le |A|-1,
$
then
$
A^{n_1}\in q\!\bigl(\mathcal{B}(G_0)_{|A|-1}\bigr).
$
\end{corollary}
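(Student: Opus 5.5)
This is a direct specialization of Corollary~\ref{cor:d-divisible-lifting}, so the plan is to take $d=n_1$ and $S=A^{n_1}$ there and check the hypotheses.

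First, $A^{n_1}$ is a zero-sum sequence over $G_0$. Indeed, $\sigma(A^{n_1})=n_1\sigma(A)=0$, so $A^{n_1}\in\mathcal{B}(G_0)$.

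Next, $A^{n_1}$ has the shape required by the corollary. Writing $A=\prod_{i=1}^k g_i^{m_i}$, we have
\[
A^{n_1}=\prod_{i=1}^k g_i^{\,n_1 m_i}.
\]
This is of the form $\prod_{i=1}^k g_i^{d s_i}$ with $d=n_1$ and $s_i=m_i\in\mathbb{N}_0$.

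The remaining hypothesis of Corollary~\ref{cor:d-divisible-lifting} is $d\,\beta_{\mathrm{sep}}(dG)\le |A|-1$. With $d=n_1$ this is exactly the assumption $n_1\,\beta_{\mathrm{sep}}(n_1G)\le |A|-1$ of the present statement. The other standing hypotheses also carry over unchanged: $A$ is a separating atom over $G_0=\{g_1,\dots,g_k\}$ with $|A|=\beta_{\mathrm{sep}}(G)$.

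Applying Corollary~\ref{cor:d-divisible-lifting} then gives $A^{n_1}\in q\bigl(\mathcal{B}(G_0)_{|A|-1}\bigr)$. There is no real obstacle here. The only care needed is the degenerate case $n_1G=\{0\}$, where $\beta_{\mathrm{sep}}(\{0\})=1$ by the paper's convention. That case is already covered by Lemma~\ref{lem:uniform-short-generation}, hence by Corollary~\ref{cor:d-divisible-lifting}, so no separate argument is required.
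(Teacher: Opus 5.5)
Your proposal is correct and is exactly the paper's argument: it specializes Corollary~\ref{cor:d-divisible-lifting} to $d=n_1$ and $S=A^{n_1}=\prod_{i=1}^k g_i^{n_1 m_i}$, whose hypotheses are met verbatim. Your remark on the degenerate case $n_1G=\{0\}$ is a harmless extra check, since Lemma~\ref{lem:uniform-short-generation} already treats $H=\{0\}$ explicitly.
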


\begin{proof}
Apply Corollary~\ref{cor:d-divisible-lifting} with
$
d=n_1
$
and
$
S=A^{n_1}.
$
Indeed,
$
A^{n_1}=\prod_{i=1}^k g_i^{n_1 m_i}\in \mathcal{B}(G_0),
$
and every multiplicity in $A^{n_1}$ is divisible by $n_1$.
\end{proof}

\medskip
We now assemble the tools developed in this section
to establish the exact separating Noether number for the even-rank case.

\medskip
\begin{proposition}
\label{pro:even}
Let
$
G\cong C_{n_1}\oplus \cdots \oplus C_{n_r}
$
with
$
1<n_1\mid \cdots \mid n_r$ and
$
r=2s.$
Let $p_1$ be the minimal prime divisor of $n_1$.
Then
$
\beta_{\mathrm{sep}}(G)
=
\frac{n_s}{p_1}+n_{s+1}+\cdots+n_r.
$
\end{proposition}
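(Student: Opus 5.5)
The lower bound $\beta_{\mathrm{sep}}(G)\ge \frac{n_s}{p_1}+T$, where $T=\sum_{j=s+1}^r n_j$, is Lemma~\ref{lower-bound}, so only the upper bound needs proof. Let $n=n_s$. Using Lemma~\ref{reduced}, I fix a separating atom $A$ over $G_0=\{g_1,\dots,g_k\}$ with $k\le 2s+1$ and $|A|=\beta_{\mathrm{sep}}(G)$, and argue by contradiction, assuming $|A|>T+\frac{n}{p_1}$. Proposition~\ref{thm:even-half-bound} gives $|A|\le T+\frac n2$, so we are exactly in the range of Proposition~\ref{thm:A6-free}. That proposition yields $m\in\N$ with $\gcd(m,n_1)=1$ and $A^m\in q(\mathcal{B}(G_0)_{|A|-1})$.

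Next I show that $A^{n_1}$ also lies in $q(\mathcal{B}(G_0)_{|A|-1})$, via Corollary~\ref{cor:n1-power-lifting}. This requires $n_1\beta_{\mathrm{sep}}(n_1G)\le |A|-1$. If $n_1G$ is nontrivial, Lemma~\ref{lem:n1-rank-shift} gives $n_1\beta_{\mathrm{sep}}(n_1G)\le T<T+\frac n{p_1}<|A|$. If $n_1G=\{0\}$, the convention $\beta_{\mathrm{sep}}(\{0\})=1$ gives $n_1\beta_{\mathrm{sep}}(n_1G)=n_1\le n\le T<|A|$; here $T\ge n$ because $s\ge 1$. (Alternatively, this degenerate case follows directly from Lemma~\ref{lem:n-divisible-lifting}: all $n_i$ are equal to $n_1=n$, and $A^{n}$ has all multiplicities divisible by $n$.)

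Since $\gcd(m,n_1)=1$, choose integers $\alpha,\beta$ with $\alpha m+\beta n_1=1$. In the group $q(\mathcal{B}(G_0))$ we then have $A=(A^m)^{\alpha}(A^{n_1})^{\beta}\in q(\mathcal{B}(G_0)_{|A|-1})$, contradicting that $A$ is separating. Hence $\beta_{\mathrm{sep}}(G)=|A|\le T+\frac{n_s}{p_1}$, and together with the lower bound this gives equality.

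The substantive difficulties sit in the earlier results: Proposition~\ref{thm:A6-free}, and the uniform generation lemma behind Corollary~\ref{cor:n1-power-lifting}. The only delicate points in this assembly are checking that the hypotheses of Proposition~\ref{thm:A6-free} hold exactly, which needs the half-bound, and handling the trivial case $n_1G=\{0\}$ in the rank-shift step.
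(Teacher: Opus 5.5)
Your proposal is correct and follows the paper's proof step for step. Assuming $|A|>T+\frac{n}{p_1}$ and using the half-bound puts you in the range of Proposition~\ref{thm:A6-free}; Lemma~\ref{lem:n1-rank-shift}, or the trivial case $n_1G=\{0\}$, then supplies the hypothesis of Corollary~\ref{cor:n1-power-lifting}; and B\'ezout applied to $\gcd(m,n_1)=1$ gives the contradiction. The paper merely treats $p_1=2$ separately first, whereas in your version that case is absorbed automatically because the assumed range $T+\frac{n}{2}<|A|\le T+\frac{n}{2}$ is empty.
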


\begin{proof}
Set
$
n=n_s
$
and
$
T=\sum_{j=s+1}^r n_j.
$
By Lemma \ref{lower-bound},
\begin{equation}
\label{EV1}
\beta_{\mathrm{sep}}(G)\ge T+\frac{n}{p_1}.
\end{equation}
On the other hand, Proposition~\ref{thm:even-half-bound} says,
\begin{equation}
\label{EV2}
\beta_{\mathrm{sep}}(G)\le T+\frac{n}{2}.
\end{equation}

If $p_1=2$, then \eqref{EV1} and \eqref{EV2} coincide, so we are done.
Assume from now on that
$
p_1>2.
$
Suppose for contradiction that
$
\beta_{\mathrm{sep}}(G)>T+\frac{n}{p_1}.
$
Choose, by Lemma \ref{reduced}, a separating atom
$
A
$
over some subset $G_0\subseteq G$ such that
$
|G_0|\le r+1
$
and
$
|A|=\beta_{\mathrm{sep}}(G).
$
Then
\begin{equation}
\label{EV3}
T+\frac{n}{p_1}<|A|\le T+\frac{n}{2}.
\end{equation}
By Proposition~\ref{thm:A6-free},
there exists an integer $m\in \mathbb{N}$ such that
\begin{equation}
\label{EV4}
\gcd(m,n_1)=1
\qquad\text{and}\qquad
A^m\in q\!\bigl(\mathcal{B}(G_0)_{|A|-1}\bigr).
\end{equation}
If $n_1G\neq \{0\}$, then Lemma~\ref{lem:n1-rank-shift} gives
$
n_1\,\beta_{\mathrm{sep}}(n_1G)\le T.
$
If $n_1G=\{0\}$, then by convention $\beta_{\mathrm{sep}}(n_1G)=1$, and since necessarily
$
n_1=n_2=\cdots=n_r=n,
$
we have
$
T=sn\ge n_1=n_1\,\beta_{\mathrm{sep}}(n_1G).
$
So in all cases,
$
n_1\,\beta_{\mathrm{sep}}(n_1G)\le T.
$
Since \(T<|A|\) by \eqref{EV3} and both quantities are integers, we have
$
T\le |A|-1.
$
Therefore
$
n_1\,\beta_{\mathrm{sep}}(n_1G)\le |A|-1.
$
Hence Corollary~\ref{cor:n1-power-lifting} gives
\begin{equation}
\label{EV5}
A^{n_1}\in q\!\bigl(\mathcal{B}(G_0)_{|A|-1}\bigr).
\end{equation}
Since $\gcd(m,n_1)=1$, there exist integers $\lambda_1,\lambda_2\in \mathbb{Z}$ such that
$
\lambda_1 n_1+\lambda_2 m=1.
$
Equations \eqref{EV4} and \eqref{EV5} imply
$
A
=
(A^{n_1})^{\lambda_1}(A^m)^{\lambda_2}
\in
q\!\bigl(\mathcal{B}(G_0)_{|A|-1}\bigr),
$
contradicting the fact that $A$ is a separating atom.
Therefore our assumption was false, and
$
\beta_{\mathrm{sep}}(G)\le T+\frac{n}{p_1}.
$
Combining this with \eqref{EV1}, we obtain
$
\beta_{\mathrm{sep}}(G)
=
T+\frac{n}{p_1}
=
\frac{n_s}{p_1}+n_{s+1}+\cdots+n_r.
$
\end{proof}

Combining Propositions \ref{thm:odd-unconditional} and \ref{pro:even}, the proof of Theorem~\ref{formulas} is now complete.
As an application of our explicit formulas and the structural properties of maximal separating atoms, we now prove the conclusion conjectured in \cite[Conjecture~5.4]{SZZ2509} for all groups of rank at least $2$. We now prove Corollary~\ref{cor:conj54-resolved}.

\begin{proof}[Proof of Corollary~\ref{cor:conj54-resolved}]
Set
$
G_1=\supp(A)
$
and
$
k=|G_1|.
$
By definition, the terms of $A$ consist exactly of the
elements in $G_1$, so we may write
$
G_1=\{g_1,\dots,g_k\}
$
and
$
A=\prod_{i=1}^k g_i^{m_i}
$
with
$
m_i\ge 1
$
for
$1\le i\le k.
$
We see that $A$ is also a separating atom over $G_1$.
Now we treat odd and even rank separately.

\smallskip
\noindent
\textbf{Case 1}: $r$ is odd.
Let
\[
r=2s-1,
\qquad
n=n_s,
\qquad
T=n_{s+1}+\cdots+n_r.
\]
For each $i\in [1,k]$, write
\[
m_i=nk_i+x_i,
\qquad
k_i\in \mathbb{N}_0,
\quad
0\le x_i\le n-1.
\]
Set
\[
h_i=ng_i\in nG,
\qquad
B=\prod_{i=1}^k h_i^{k_i}.
\]
Since $A$ is separating over $G_1$ and $|A|=\beta_{\mathrm{sep}}(G)$, Lemma~\ref{lem:B-geodesic} gives
$
|B|\le \mathsf{D}^*(nG)-1.
$
Because
$
n\bigl(\mathsf{D}^*(nG)-1\bigr)=T-(s-1)n,
$
we obtain
\begin{equation}
\label{1}
|A|
=
n|B|+\sum_{i=1}^k x_i
\le
T-(s-1)n+\sum_{i=1}^k x_i.
\end{equation}
Next choose $(t_1,\dots,t_k)\in \mathbb{N}^k$
with $\sum_{i=1}^kt_i$ minimal subject to
$
\sum_{i=1}^k (t_i n-x_i)g_i=0,
$
and define
\[
V=\prod_{i=1}^k g_i^{t_i n-x_i},
\qquad
Y=\prod_{i=1}^k h_i^{t_i-1}.
\]
Then
$
AV=\prod_{i=1}^k g_i^{(k_i+t_i)n}
$
is a zero-sum sequence all of whose multiplicities are divisible by $n$.
By Lemma~\ref{lem:n-divisible-lifting},
$
AV\in q\!\bigl(\mathcal{B}(G_1)_{|A|-1}\bigr).
$
If $|V|\le |A|-1$, then
$
V\in \mathcal{B}(G_1)_{|A|-1}\subseteq q\!\bigl(\mathcal{B}(G_1)_{|A|-1}\bigr),
$
so
$
A=(AV)\cdot V^{-1}\in q\!\bigl(\mathcal{B}(G_1)_{|A|-1}\bigr),
$
contradicting that $A$ is separating over $G_1$.
Hence
$
|V|\ge |A|.
$
By Lemma~\ref{lem:Y-geodesic} (taking $l=1$),
$
|Y|\le \mathsf{D}^*(nG)-1.
$
Therefore
\begin{equation}
\label{3}
|V|
=
n|Y|+kn-\sum_{i=1}^k x_i
\le
T-(s-1)n+kn-\sum_{i=1}^k x_i.
\end{equation}
Combining \eqref{1}, \eqref{3}, and the fact $|V|\geq |A|$,
we get
$
2|A|
\le
2T+(k-2s+2)n.
$
By Proposition~\ref{thm:odd-unconditional},
$
|A|=\beta_{\mathrm{sep}}(G)=T+n,
$
yielding
$
k\ge 2s=r+1.
$
On the other hand,
$
k=|\supp(A)|\le |G_0|\le r+1.
$
Therefore
$
|\supp(A)|=|G_0|=r+1.
$

\smallskip
\noindent
\textbf{Case 2}: $r$ is even.
Let
\[
r=2s,
\qquad
n=n_s,
\qquad
T=n_{s+1}+\cdots+n_r.
\]
For each $i\in [1,k]$, write
\[
m_i=nk_i+x_i,
\qquad
k_i\in \mathbb{N}_0,
\quad
0\le x_i\le n-1,
\]
and set
\[
h_i=ng_i\in nG,
\qquad
B=\prod_{i=1}^k h_i^{k_i}.
\]
Again Lemma~\ref{lem:B-geodesic} yields
$
|B|\le \mathsf{D}^*(nG)-1.
$
Since
$
n\bigl(\mathsf{D}^*(nG)-1\bigr)=T-sn,
$
we get
\begin{equation}
\label{5}
|A|
=
n|B|+\sum_{i=1}^k x_i
\le
T-sn+\sum_{i=1}^k x_i.
\end{equation}
Next choose $(t_1,\dots,t_k)\in \mathbb{N}^k$ with
$\sum_{i=1}^kt_i$ minimal subject to
$
\sum_{i=1}^k (t_i n-x_i)g_i=0,
$
and define
\[
V=\prod_{i=1}^k g_i^{t_i n-x_i},
\qquad
Y=\prod_{i=1}^k h_i^{t_i-1}.
\]
As in the odd case,
$
AV=\prod_{i=1}^k g_i^{(k_i+t_i)n}
$
is $n$-divisible and zero-sum, so Lemma~\ref{lem:n-divisible-lifting} gives
$
AV\in q\!\bigl(\mathcal{B}(G_1)_{|A|-1}\bigr).
$
If $|V|\le |A|-1$, then
$
A=(AV)\cdot V^{-1}\in q\!\bigl(\mathcal{B}(G_1)_{|A|-1}\bigr),
$
a contradiction.
Thus
$
|V|\ge |A|.
$
By Lemma~\ref{lem:Y-geodesic} (taking $l=1$),
$
|Y|\le \mathsf{D}^*(nG)-1,
$
whence
\begin{equation}
\label{7}
|V|
=
n|Y|+kn-\sum_{i=1}^k x_i
\le
T-sn+kn-\sum_{i=1}^k x_i.
\end{equation}
Combining \eqref{5}, \eqref{7}, and
the fact $|V|\geq|A|$, we obtain
$
2|A|
\le
2T+(k-2s)n.
$
Let $p_1$ be the minimal prime divisor of $n_1$.
By Proposition~\ref{pro:even},
$
|A|=\beta_{\mathrm{sep}}(G)=T+\frac{n}{p_1}.
$
We get
$
2\Bigl(T+\frac{n}{p_1}\Bigr)\le 2T+(k-2s)n,
$
which simplifies to
$
\frac{2}{p_1}\le k-2s.
$
Since $k-2s$ is an integer and $\frac{2}{p_1}>0$, it follows that
$
k-2s\ge 1,
$
hence
$
k\ge 2s+1=r+1.
$
Since
$
k=|\supp(A)|\le |G_0|\le r+1,
$
we have
$
|\supp(A)|=|G_0|=r+1.
$
This completes the proof.
\end{proof}


\begin{thebibliography}{99}
\bibitem{Cz19}
K. Cziszter, The Noether number of $p$-groups, J. Algebra Appl.
18 (2019) 1950066.

\bibitem{Cz14}
K. Cziszter and M. Domokos, The Noether number for the groups with a cyclic subgroup of index two, J. Algebra 399
(2014) 546--560.


\bibitem{Cz18}
K. Cziszter, M. Domokos,
and I. Sz\"{o}ll\H{o}si, The Noether numbers and the Davenport constants of the groups of order less
than 32, J. Algebra 510 (2018) 513--541.

\bibitem{DerksenKemper02}
H. Derksen and G. Kemper,
Computational Invariant Theory,
Encyclopaedia of Mathematical Sciences 130,
Springer, Berlin, 2002.

\bibitem{Domokos17}
M. Domokos,
Degree bound for separating invariants of Abelian groups,
Proc. Amer. Math. Soc. 145 (2017), 3695--3708.

\bibitem{GaoGeroldinger06}
W. Gao and A. Geroldinger,
Zero-sum problems in finite abelian groups: a survey,
Expo. Math. 24 (2006), 337--369.

\bibitem{GeroldingerHK06}
A. Geroldinger and F. Halter-Koch,
Non-Unique Factorizations. Algebraic, Combinatorial and Analytic Theory,
Pure and Applied Mathematics 278,
Chapman \& Hall/CRC,  2006.



\bibitem{KL}
B. Klopsch and V. F. Lev,
Generating abelian groups by addition only,
Forum Math. 21 (2009),   23--41.

\bibitem{KohlsKraft10}
M. Kohls and H. Kraft,
Degree bounds for separating invariants,
Math. Res. Lett. 17 (2010),   1171--1182.


\bibitem{Noether16}
E. Noether,
Der Endlichkeitssatz der Invarianten endlicher Gruppen,
Math. Ann. 77 (1916), 89--92.


\bibitem{OlsonI}
J. E. Olson,
A combinatorial problem on finite Abelian groups. I,
J. Number Theory  1 (1969), 8--10.

\bibitem{OlsonII}
J. E. Olson,
A combinatorial problem on finite Abelian groups. II,
J. Number Theory  1 (1969), 195--199.


\bibitem{SchJCTA}
B. Schefler,
The separating Noether number of Abelian groups of rank two,
J. Combin. Theory Ser. A 209 (2025)
105951.

\bibitem{Sch25}
B. Schefler,
The separating Noether number of the direct sum of several copies of a cyclic group,
Proc. Amer. Math. Soc. 153 (2025), 69--79.

\bibitem{SZZ}
B. Schefler, K. Zhao,  and Q. Zhong,
On the separating Noether number of finite abelian groups,
European J. Combin. 133 (2026), 104302.



\bibitem{SZZ2509}
B. Schefler, K. Zhao, and Q. Zhong,
On separating sets of polynomial invariants
of finite abelian group actions,
arXiv:2509.16097, 2025.


\bibitem{Schmid91}
B. Schmid,
Finite groups and invariant theory,
in: Topics in Invariant Theory,
Lecture Notes in Math.
Springer,  1478 (1991), 35--66.

\end{thebibliography}
\end{document}